\newtheorem{theorem}{Theorem}[section]
\newtheorem{example}[theorem]{Example}
\newtheorem{examples}[theorem]{Examples}
\newtheorem{proposition}[theorem]{Proposition}
\newtheorem{remark}[theorem]{Remark}
\newtheorem{corollary}[theorem]{Corollary}
\newtheorem{lemma}[theorem]{Lemma}
\theoremstyle{definition}
\newtheorem{definition}[theorem]{Definition}
\newcommand{\beq}{\begin{equation}}
\newcommand{\eeq}{\end{equation}}
\newcommand{\dhom}[3]{DH_{#1, #2}(#3)}
\newcommand{\dhomr}[2]{\dhom{#1}{r}{#2}}
\newcommand{\dhomi}[1]{\dhom{n}{\infty}{#1}}
\newcommand{\dhomn}[1]{\dhomr{n}{#1}}
\newcommand{\wdhom}[3]{\widetilde{DH}_{#1, #2}(#3)}
\newcommand{\wdhomr}[2]{\wdhom{#1}{r}{#2}}
\newcommand{\wdhomn}[1]{\wdhomr{n}{#1}}
\DeclareMathOperator{\id}{Id}
\title{Discrete Homology Theory for Metric Spaces}
\author{H\'el\`ene Barcelo}
\thanks{Address: Mathematical Sciences Research Institute, 17 Gauss Way, Berkeley, CA 94720}
\thanks{Email: hbarcelo@msri.org}
\author{Valerio Capraro}
\thanks{Address: Department of Mathematics, University of Southampton, Southampton, SO17 1BJ, UK}
\thanks{Email: V.Capraro@soton.ac.uk}
\author{Jacob A. White}
\thanks{Address Department of Mathematics, Texas A\& M University, Mailstop 3368, College Station, TX 77843-3368}
\thanks{Email:jwhite@math.tamu.edu}
\thanks{This material is based upon work supported by the National Science Foundation under Grant No. 0932078 000, 
while the first and second authors were in residence at the Mathematical Science Research Institute (MSRI) in Berkeley, California, during the Fall 2013 Semester.}
\keywords{Discrete Homotopy Theory, Coarse Geometry, Homology Theory}
\subjclass[2010]{Primary: 51F99, 55N35, Secondary: 05E99}
\begin{document}

\maketitle

\begin{abstract}
We define and study a notion of
discrete homology theory for metric spaces.
Instead of working with simplicial homology, our chain complexes are given by Lipschitz maps from an $n$-dimensional cube to a fixed metric space.
We prove that the resulting homology theory satisfies a discrete analogue of the Eilenberg-Steenrod axioms, and prove a discrete analogue of the Mayer-Vietoris exact sequence. 
Moreover, this discrete homology theory is related to the discrete 
homotopy theory of a metric space through a discrete analogue of the Hurewicz theorem. We study the class of groups that can arise as discrete homology groups and, in this setting, 
we prove that the fundamental group of a smooth, connected, metrizable, compact manifold is isomorphic to the discrete fundamental group of a `fine enough' rectangulation of the manifold. 
Finally, we show that this discrete homology theory can be coarsened, leading to a new non-trivial coarse invariant of a metric space.
\end{abstract}

\section{Introduction}\label{se:introduction}

Discrete homotopy theory is a discrete analogue of homotopy theory, associating a bigraded sequence of groups to a simplicial complex, capturing its combinatorial 
structure, rather than its topological structure. Originally called A-theory, it was developed in 
\cite{Kr-La98}, \cite{Ba-Kr-La-We01}, \cite{Ba-La05}, and \cite{Ba-Ba-Lo-La06}, 
which built on the work of Atkin \cite{At74},\cite{At76}.
 Discrete homotopy theory can be equivalently defined for finite connected graphs, 
resulting in an algebraic invariant of finite connected graphs and graph homomorphisms, in the same way that 
classical homotopy theory gives invariants of topological spaces and continuous maps: it associates a sequence $A_i(G)$ of groups to a finite connected graph. 
Discrete homotopy theory has been applied to different areas, 
including subspace arrangements \cite{Ba-Se-Wh11}, amenability of graphs \cite{Ca12a}, and pattern recognition \cite{Ca12b}. 
Discrete homotopy theory can also be generalized to arbitrary metric spaces, depending on a parameter $r$. The rationale for generalizing to arbitrary metric spaces 
will be given after Theorem \ref{th:lastintro}.

Is there a discrete homology theory that is related to discrete homotopy theory in the same way that 
classical homology is related to classical homotopy theory? The goal of this paper is to answer this question. 
We construct a collection of functors from the category of metric spaces and Lipschitz maps to the category of abelian groups. 
We define a discrete analogue of the Eilenberg-Steenrod axioms. 
Our first theorem is:

\begin{theorem}[Theorem \ref{th:eilenbergsteenrod}]
The discrete homology theory satisfies the discrete analogue of the Eilenberg-Steenrod axioms.
\end{theorem}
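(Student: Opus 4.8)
The plan is to verify the axioms one at a time, separating those that hold for formal reasons at the chain level from the one construction that requires genuine work. A Lipschitz map $f\colon X\to Y$ induces a chain map by post-composition, $\sigma\mapsto f\circ\sigma$, and since composition with $f$ commutes with the face maps $d_i^{\epsilon}$ of the cube, this is compatible with the boundary operator; functoriality of $\dhomn{\cdot}$ is then immediate from associativity of composition together with $\id_X$ inducing the identity chain map. For the dimension axiom I would compute $\dhomn{\{\mathrm{pt}\}}$ directly: the only Lipschitz cube into a point is the constant one, which is degenerate in every positive dimension, so after passing to the normalized complex one is left with $\mathbb{Z}$ in degree $0$ and nothing above it, giving $\dhomn{\{\mathrm{pt}\}}=0$ for $n>0$ and $\mathbb{Z}$ for $n=0$. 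This is also the step that justifies building the theory from the \emph{normalized} complex: without quotienting out degenerate cubes the point would carry homology in every degree.

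For the exactness axiom I would define relative homology as the homology of the quotient complex. Because the subspace metric makes every Lipschitz cube in $A$ a Lipschitz cube in $X$, the inclusion of chain complexes $C_{*}(A)\hookrightarrow C_{*}(X)$ is injective, and the resulting short exact sequence $0\to C_{*}(A)\to C_{*}(X)\to C_{*}(X,A)\to 0$ produces the long exact sequence of the pair through the snake lemma; naturality of the connecting homomorphism under Lipschitz maps of pairs is the standard diagram chase. The additivity axiom for disjoint unions is likewise formal: a cube has connected domain and bounded image, so it lands in a single piece, and the chain complex of a disjoint union splits as the direct sum of the complexes of its pieces.

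The substantive axiom is homotopy invariance, and here I would build a cubical prism operator. Given a discrete homotopy $H$ at scale $r$ between Lipschitz maps $f_0,f_1\colon X\to Y$, I would define $P\colon C_{n}(X)\to C_{n+1}(Y)$ as the signed sum of the restrictions of $H\circ(\sigma\times\id)$ to the cells of a fixed cellulation of $I^{n}\times I$ into $(n+1)$-cubes, and verify the chain-homotopy identity $\partial P+P\partial=(f_1)_{*}-(f_0)_{*}$ by matching the faces of the subdivided prism. I expect this to be the main obstacle, for two reasons. First, one must check that $H\circ(\sigma\times\id)$ and all of its face restrictions remain Lipschitz \emph{at scale $r$}, so that $P$ genuinely lands in the discrete chain complex and not merely in a singular one; since a discrete homotopy proceeds through several scale-$r$ steps, $P$ is really a double sum over time layers and prism cells, and keeping every cell admissible is where the scale parameter must be tracked carefully. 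Second, one must verify that $P$ sends degenerate cubes to degenerate chains, so that it descends to the normalized complex used to define $\dhomn{\cdot}$.

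Granting the prism construction, homotopy invariance follows. For the remaining excision-type statement and the Mayer--Vietoris sequence I would set up a parallel subdivision operator $\mathrm{Sd}$ on cubical chains together with a chain homotopy satisfying $\partial T+T\partial=\mathrm{Sd}-\id$, and then prove an ``$r$-small cubes'' estimate: iterating $\mathrm{Sd}$ rewrites any admissible cube as a sum of cubes subordinate to a given cover while never leaving scale $r$. The delicate point, exactly as in the classical singular theory, is controlling the Lipschitz constants and the admissibility of the subdivided cubes under repeated application of $\mathrm{Sd}$ so that the smallness process terminates; once this is in hand, the comparison between the subcomplex of small chains and the full complex yields the admissible (scale-$r$) form of excision, and Mayer--Vietoris follows in the usual way.
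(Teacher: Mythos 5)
Your treatment of the dimension axiom, the long exact sequence, and functoriality matches the paper's. For homotopy invariance your prism operator is essentially the paper's chain homotopy, but you are overcomplicating it: a discrete homotopy is a map $F\colon X\times\{0,\ldots,m\}\to Y$, and since $Q_{n+1}=Q_n\times\{0,1\}$ is already a single discrete cube, each time step $i\mapsto i+1$ produces exactly one singular $(n+1)$-cube $\sigma_i$ with front face $F(\sigma,i)$ and back face $F(\sigma,i+1)$. There is no cellulation of $I^n\times I$ to choose and no per-cell admissibility to track; the operator is the single sum $\Phi(\sigma)=\sum_{i=0}^{m-1}\sigma_i$ over time layers, and the Lipschitz bound is automatic because $F$ is itself Lipschitz for the $\ell^1$-metric on $X\times\{0,\ldots,m\}$. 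Your point that one must check $\Phi$ respects the quotient by degenerate cubes is legitimate and is in fact glossed over in the paper.

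The genuine problem is your plan for excision and Mayer--Vietoris. You propose a cubical subdivision operator $\mathrm{Sd}$ with an ``$r$-small cubes'' theorem, but this machinery cannot be built in this setting: a singular $(n,r)$-cube is a map defined only on the $2^n$ vertices of the Hamming cube, so there are no intermediate points at which a subdivided cube could be evaluated, and a general metric space (a graph, say) has no midpoints with which to interpolate. More importantly, the discrete analogue of excision is stated only for \emph{discrete covers} $\{A,B\}$, which are \emph{defined} by the property that every non-degenerate $(n,r)$-cube already lies entirely in $A$ or in $B$. With that definition the subcomplex of cover-subordinate chains equals the whole chain complex by fiat, the map $C_{n,r}(B)/C_{n,r}(A\cap B)\to C_{n,r}(X)/C_{n,r}(A)$ is an isomorphism of free abelian groups on the nose, and both excision and the surjectivity step of Mayer--Vietoris are immediate. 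The analytic content of classical excision has been moved into the hypothesis, and the paper's counterexample with $C_5$ covered by a single vertex and its complement shows the conclusion genuinely fails for covers that are not discrete covers. You should replace the subdivision argument with the observation that the cover hypothesis makes it unnecessary.
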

In the process, we define a notion of discrete cover, and prove an analogue of the Mayer-Vietoris exact sequence.
Continuing our analogy with classical homology, we prove the Hurewicz theorem in dimension 1.
\begin{theorem}[Theorem \ref{th:abelianization1}]
The abelianization of the discrete fundamental group of a metric space (at scale $r$) is isomorphic to its first discrete homology group (at scale $r$).
\end{theorem}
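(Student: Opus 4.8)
The plan is to imitate the classical proof of the one-dimensional Hurewicz theorem, translated into the cubical, metric framework, working throughout with a fixed basepoint $x_0$ and assuming $X$ is connected at scale $r$ (the general statement then follows componentwise). The first step is to construct the \emph{Hurewicz homomorphism}. A based discrete loop $\gamma$ at scale $r$ is in particular a singular $1$-cube with coinciding endpoints, hence a $1$-chain with $\partial\gamma = 0$; it therefore represents a class in $\dhom{1}{r}{X}$. Sending the discrete homotopy class of $\gamma$ to this homology class defines a map $h\colon A_1^r(X,x_0)\to\dhom{1}{r}{X}$, and the bulk of the argument is to check that $h$ is a well-defined isomorphism onto, after abelianizing the source.

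Next I would verify that $h$ is a well-defined homomorphism. For well-definedness, a discrete homotopy between loops $\gamma_0$ and $\gamma_1$ is realized by a two-dimensional grid of singular $2$-cubes whose bottom and top faces assemble to $\gamma_0$ and $\gamma_1$ and whose two lateral faces are constant at $x_0$. Taking the boundary of this $2$-chain and discarding the degenerate $1$-cubes coming from the constant lateral edges (which vanish in the quotient by degenerate cubes) yields $\gamma_1 - \gamma_0$ as a boundary, so homotopic loops are homologous. For the homomorphism property I would use the standard ``triangle'' filling: given loops $\alpha,\beta$, one exhibits a single $2$-cube whose boundary, modulo degenerate faces, equals $\alpha\ast\beta - \alpha - \beta$, whence $h(\alpha\ast\beta)=h(\alpha)+h(\beta)$. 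Since $\dhom{1}{r}{X}$ is abelian, $h$ annihilates commutators and descends to $\bar h\colon A_1^r(X,x_0)^{\mathrm{ab}}\to\dhom{1}{r}{X}$.

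Surjectivity of $\bar h$ should be relatively routine and rests on connectedness at scale $r$. Given an arbitrary $1$-cycle $z$, I would fix for each vertex appearing in $z$ a discrete path from $x_0$ to that vertex, and use these paths to rewrite $z$, modulo boundaries and degenerate cubes, as a sum of based loops. Each such loop lies in the image of $h$, so $\bar h$ is onto. The signs work out because $z$ is a cycle, so the correction terms introduced by the chosen paths cancel telescopically.

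The principal obstacle, exactly as classically, is injectivity. Suppose a based loop $\gamma$ is null-homologous, so that its $1$-cycle equals $\partial c$ for a $2$-chain $c=\sum_i \varepsilon_i\sigma_i$ of nondegenerate singular $2$-cubes. To each $\sigma_i$ I would associate the based loop obtained by traversing the four faces of $\sigma_i$ and returning through the fixed basepoint paths, and then show that this boundary-loop is discretely null-homotopic, so trivial in $A_1^r(X,x_0)^{\mathrm{ab}}$; this is the discrete incarnation of the fact that a map of the square is a homotopy of its boundary, and it is here that one must argue carefully with the cubical face and degeneracy identities and with the scale $r$. Summing the relation $\partial c = \gamma$ over all $\sigma_i$, the interior edges cancel in pairs and the basepoint-path corrections cancel in the abelianization, leaving $[\gamma]$ equal to a sum of trivial boundary-loop classes, hence $[\gamma]=0$. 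I expect the careful bookkeeping of orientations, signs, and degenerate faces in this last step, and in particular the precise matching of the cubical boundary operator with the combinatorial concatenation of discrete loops, to be the main technical difficulty of the proof.
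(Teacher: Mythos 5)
Your overall architecture coincides with the paper's proof: define a map from based $r$-loops to $1$-cycles, check well-definedness via the homotopy grid of singular $(2,r)$-cubes, prove surjectivity by attaching basepoint paths to the edges of an arbitrary cycle (with telescoping cancellation because the chain is a cycle), and prove that the kernel is the commutator subgroup by decomposing a bounding $2$-chain into based boundary loops of single $2$-cubes, each of which is null-homotopic, with the bookkeeping done in the abelianization. All of this is exactly what the paper does.

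However, there is a concrete error at the very start of your construction, and it propagates into your homomorphism step. In this cubical framework a singular $(1,r)$-cube is a map from the two-point space $Q_1$, i.e.\ a single edge; a based $r$-loop $p\,x_1\cdots x_n\,p$ is \emph{not} a single $1$-cube but must be sent to the \emph{sum} $\sum_i \sigma_i$ of its constituent edge $1$-cubes (choosing a representative with $x_i\neq x_{i+1}$ so that these are non-degenerate); the cycle condition then comes from the telescoping of $\partial\sigma_i = x_{i+1}-x_i$. Consequently your proposed ``triangle filling'' for the homomorphism property cannot work: a singular $(2,r)$-cube is a map from the four-point Hamming cube $Q_2$, its four faces are single edges, and there is no single $2$-cube whose boundary is $\alpha\ast\beta-\alpha-\beta$ for general loops $\alpha,\beta$. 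The fix is that no filling is needed at all: with the correct definition, $\phi(\alpha\ast\beta)$ is literally the sum of the edge $1$-cubes of $\alpha$ and of $\beta$, so additivity is an identity of chains. With that repair, the rest of your plan goes through as in the paper.
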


Then we study the class of groups that can arise as discrete homology groups of a metric space and we obtain the following proposition.
\begin{proposition}[Corollary \ref{cor:groups appearing as homology groups}]
For any abelian group $G$ and $\bar n\in\mathbb N$, there is a finite connected graph $\Gamma$ such that the $\bar n$-dimensional discrete homology group of $\Gamma$ (at scale $1$) is $G$ and all others are trivial.
\end{proposition}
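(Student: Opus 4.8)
The plan is to realize $G$ from cyclic building blocks, assemble them with a wedge, and transport everything up to dimension $\bar n$ by an iterated discrete suspension. I would first invoke the structure theorem to reduce to the case where $G$ is cyclic: if I can produce, for each $\bar n$, finite connected graphs realizing $\mathbb{Z}$ (respectively $\mathbb{Z}/m$) in dimension $\bar n$ with all other reduced discrete homology trivial, then I can wedge finitely many of them at a common basepoint vertex and use the additivity built into the discrete Eilenberg--Steenrod axioms (Theorem \ref{th:eilenbergsteenrod}), together with the discrete Mayer--Vietoris sequence, to conclude that the homology of the wedge is the direct sum of the homologies of the pieces. Since a wedge of connected graphs along a vertex is again a finite connected graph, this produces the required $\Gamma$ once each cyclic summand of $G$ has been modelled.

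For the base dimension I would work in dimension $1$, where the discrete Hurewicz theorem (Theorem \ref{th:abelianization1}) identifies $DH_{1,1}$ with the abelianization of the discrete fundamental group. A cycle graph $C_\ell$ with $\ell \geq 5$ has discrete fundamental group $\mathbb{Z}$, hence $DH_{1,1}(C_\ell) = \mathbb{Z}$, and its higher discrete homology vanishes; this is my model for $\mathbb{Z}$. To model $\mathbb{Z}/m$ I would construct a finite graph whose discrete fundamental group has abelianization $\mathbb{Z}/m$---concretely, a discrete mapping cone of a degree-$m$ map between cycle graphs, so that a generating cycle becomes null-homotopic only after being traversed $m$ times---and again check that nothing survives above dimension $1$. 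By Hurewicz this graph realizes $\mathbb{Z}/m$ in dimension $1$.

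To pass from dimension $1$ to an arbitrary $\bar n$, I would introduce a discrete suspension $\Sigma$ on graphs (two discrete cones glued along the original graph) and prove a suspension isomorphism $\widetilde{DH}_{i+1,1}(\Sigma\Gamma) \cong \widetilde{DH}_{i,1}(\Gamma)$. This is exactly the setting of the discrete Mayer--Vietoris sequence: covering $\Sigma\Gamma$ by its two cones via a discrete cover, each cone being discretely acyclic, the connecting homomorphism of the sequence yields the isomorphism. Iterating the suspension $\bar n - 1$ times carries each one-dimensional model to a finite connected graph realizing the same cyclic group in dimension $\bar n$ with everything else trivial, and feeding these into the wedge of the first paragraph completes the construction.

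The main obstacle I anticipate is not the assembly but the two geometric inputs that feed the homological machinery: first, defining the discrete cone and verifying its acyclicity at scale $1$, so that the Mayer--Vietoris suspension isomorphism is genuinely available and the cyclic models carry no spurious homology; and second, making sense of a degree-$m$ map in the combinatorial category and proving that its mapping cone contributes exactly $\mathbb{Z}/m$ and nothing else. Both hinge on controlling how Lipschitz cubes behave under these gluings at scale $1$, which is the delicate point where the discrete theory departs from the classical one.
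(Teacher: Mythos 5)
Your overall architecture --- realize the group in dimension one and transport it upward by a discrete suspension --- is the paper's own: your suspension is essentially Definition \ref{defin:suspension} and Proposition \ref{prop:suspension}, proved the same way (Mayer--Vietoris with two acyclic cones). Where you genuinely diverge is the dimension-one input. The paper realizes all of $G$ at once as the abelianization of $A_{1,1}$ of a \emph{single} graph, by rectangulating a compact manifold whose first homology is $G$ (Theorem \ref{th:discretization}, using connected sums so that the fundamental group is a free product and its abelianization a direct sum) and then applying the discrete Hurewicz theorem (Theorem \ref{th:abelianization1}). You instead build cyclic pieces ($C_\ell$ for $\mathbb{Z}$, a discrete mapping cone for $\mathbb{Z}/m$) and assemble them by a wedge.

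The wedge step is where your argument has a real gap. For the paper's Mayer--Vietoris sequence to apply to $\Gamma_1\vee_p\Gamma_2$ you need $\{\Gamma_1,\Gamma_2\}$ to be a discrete cover in the sense of Definition \ref{defin:n-dimensional cover}, i.e.\ \emph{every} non-degenerate $(n,1)$-cube must land entirely in one summand. This fails: take $a$ a neighbour of $p$ in $\Gamma_1$ and $b$ a neighbour of $p$ in $\Gamma_2$, and let $\sigma:Q_2\to\Gamma_1\vee_p\Gamma_2$ send $(0,0)\mapsto a$, $(1,1)\mapsto b$, and the other two vertices to $p$. This map is $1$-Lipschitz (the only distance-$2$ constraint is $d(a,b)=2$), non-degenerate ($A_i\sigma=(a,p)\neq(p,b)=B_i\sigma$), and lies in neither summand. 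Worse, since $A_1\sigma=A_2\sigma$ and $B_1\sigma=B_2\sigma$ it is a $2$-cycle, so you must actually show such straddling cycles bound; additivity over wedges is not one of the discrete Eilenberg--Steenrod axioms, and Example \ref{ex:no MV} shows the cover hypothesis in Mayer--Vietoris cannot simply be dropped. The same difficulty infects the mapping-cone computation for $\mathbb{Z}/m$, which glues along small subgraphs. So either supply a separate chain-level argument that wedge summands contribute additively (and that the mapping cone has the claimed homology), or follow the paper and avoid gluing altogether by realizing $G$ in one piece via Theorem \ref{th:discretization} and Theorem \ref{th:abelianization1}. Finally, note that a finite graph has finitely generated chain groups, so the statement can only be correct for finitely generated $G$ --- an assumption your appeal to the structure theorem makes tacitly and the paper's manifold construction requires as well.
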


It is worth mentioning that, in order to prove the previous proposition, we obtain a new result in discrete homotopy theory:
\begin{theorem}[Theorem \ref{th:discretization}]
\label{th:lastintro}
Let $R$ be a `fine enough' rectangulation of a compact, metrizable, smooth and path-connected manifold $M$, then the classical fundamental group of $M$ is isomorphic to the discrete fundamental group of $R$ at scale $1$.
\end{theorem}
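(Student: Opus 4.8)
The plan is to factor the desired isomorphism through a purely combinatorial description of the discrete fundamental group. Write $G = R^{(1)}$ for the $1$-skeleton of the rectangulation, regarded as a graph in which two vertices are adjacent precisely when they are the endpoints of an edge of $R$; after rescaling we may assume that scale-$1$ adjacency in $R$ coincides with this edge relation, so that the discrete fundamental group of $R$ at scale $1$ is the discrete fundamental group $A_1(G)$ of the graph $G$. The first step is to recall the standard combinatorial model for $A_1(G)$: a discrete homotopy is a map of a grid into $G$, and the image of each unit square of the grid is a closed walk of length $4$ in $G$, which is either a $4$-cycle, a $3$-cycle, or degenerate. Consequently $A_1(G) \cong \pi_1(X)$, where $X$ is the $2$-complex obtained from $G$ by attaching a $2$-cell along every $3$-cycle and every $4$-cycle. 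In particular $A_1(G) \cong F/N$, where $F = \pi_1(G)$ is free and $N \trianglelefteq F$ is the normal closure of the boundary words of all $3$- and $4$-cycles of $G$.

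Second, I would bring in the classical side through the edge-path theorem. Since $R$ is a regular cell complex with $|R| \cong M$, its edge-path group is isomorphic to $\pi_1(M)$; concretely $\pi_1(M) \cong \pi_1(R^{(2)}) \cong F/N'$, where $N' \trianglelefteq F$ is the normal closure of the boundary $4$-cycles of the square $2$-cells of $R$. Because every such boundary is in particular a $4$-cycle of $G$, we have $N' \subseteq N$ and hence a canonical surjection $\pi_1(M) = F/N' \twoheadrightarrow F/N \cong A_1(R)$. The whole theorem therefore reduces to the reverse inclusion $N \subseteq N'$, that is, to showing that every $3$-cycle and every $4$-cycle of $G$ lies in $N'$. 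By the edge-path theorem this is equivalent to the topological statement that every such small combinatorial loop is null-homotopic in $M$.

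This last point is where the hypothesis `fine enough' does the work, and it is the main obstacle. The idea is to produce a uniform scale below which all $3$- and $4$-cycles are topologically inessential. Since $M$ is a compact smooth manifold, I would equip it with a Riemannian metric and invoke the positivity of the convexity radius: geodesic balls of radius less than the convexity radius are geodesically convex, hence contractible, and they form an open cover of the compact space $M$, which therefore admits a positive Lebesgue number $\lambda$. I would then take $R$ to be a rectangulation of mesh less than $\lambda/4$. Any $3$- or $4$-cycle of $G$ has diameter at most a small multiple of the mesh, so it is contained in a single geodesically convex ball; being a loop in a contractible set, it is null-homotopic in $M$ and hence lies in $N'$. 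This yields $N \subseteq N'$, so $N = N'$ and the surjection above is an isomorphism $\pi_1(M) \cong A_1(R)$.

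The remaining delicate points are bookkeeping rather than conceptual: verifying that a fine rectangulation can be chosen to be a genuine cubical complex whose faces are honest $4$-cycles with no hanging vertices, so that $N'$ is exactly the normal closure of the square-face boundaries; and confirming that the scale-$1$ discrete structure on the metric space $R$ recovers the combinatorial graph $G$ rather than a larger adjacency relation, for instance that face diagonals remain at distance greater than $1$. Both follow from choosing a standard cubical subdivision and rescaling appropriately, after which the argument above closes the proof.
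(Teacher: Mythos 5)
Your proposal is correct, but it takes a genuinely different route from the paper's. The paper works directly at the level of loops and homotopies: it sends a continuous loop in $M$ to a discrete loop in the $1$-skeleton by nearest-point approximation, uses uniform continuity (Heine--Cantor) to turn a continuous homotopy into a discrete homotopy matrix, and conversely fills each unit square of a discrete homotopy matrix by an honest homotopy in $M$, which is possible because each such square traces a loop of length at most $4\ell/5<\ell$ (the paper's quantitative meaning of `fine enough' is that every side of $R$ has length at most $\ell/5$, where $\ell$ is the infimal length of a non-contractible loop). You instead factor everything through presentations: $A_{1,1}$ of the $1$-skeleton $G$ is $\pi_1$ of the $2$-complex obtained by attaching cells along all $3$- and $4$-cycles (the combinatorial model from the $A$-theory literature, cf.\ \cite{Ba-Kr-La-We01}), while $\pi_1(M)\cong\pi_1(R^{(2)})$ is $F/N'$ with $N'$ generated by the square-face boundaries, so the theorem reduces to the equality $N=N'$ of two normal closures, settled by observing that every $3$- or $4$-cycle of $G$ is short, hence lies in a convex geodesic ball, hence is null-homotopic in $M$. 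Your route buys a cleaner, more algebraic argument that isolates exactly where `fine enough' enters and avoids the somewhat informal surjectivity and nearest-point steps of the paper's proof; its cost is the reliance on the external isomorphism $A_1(G)\cong\pi_1(X_G)$ (whose proof is essentially the loop-by-loop argument the paper carries out by hand) and the bookkeeping you rightly flag, namely that $R$ is a regular cubical complex so the edge-path presentation applies and that the scale-$1$ relation on $X$ is the edge relation of $G$ (the paper finesses the latter by defining $X$ to be the graph with its path metric). You also replace the paper's mesh bound $\ell/5$ by a Lebesgue-number/convexity-radius bound; both are merely sufficient conditions for the one fact actually needed, that combinatorial loops of at most four edges bound disks in $M$, so this substitution is harmless.
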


A second motivation for the present paper comes from coarse geometry.
Informally speaking, coarse geometry is the study of a metric space from a large scale point of view.
Coarse algebraic topology is useful for approaching problems from different areas. For instance, 
Roe's coarse cohomology \cite{Ro93} was invented to perform index theory on non-compact manifolds and Higson and 
Roe's coarse homology \cite{Hi-Ro95} was used to formulate coarse versions of Baum-Connes's and Novikov's conjectures \cite{Yu95}. 
Other coarse homology theories have been discovered over the last two decades, including Block-Weinberger's \emph{uniformly finite homology} \cite{Bl-We92} 
and Nowak-\v Spakula's \emph{controlled coarse homology} \cite{No-Sp10}. 

The second goal of this paper is to construct a new coarse invariant obtained from the discrete homology groups.
Hence, instead of restricting ourselves to finite graphs, we work in the more general setting of metric spaces $X$ and we 
construct discrete homology groups depending on a scaling parameter $r$. 
Since a typical coarse object is a sequence of discrete objects we 
consider $\widetilde{DH}_{n,r}(X)$, the countably infinite direct product of copies of $\dhomn{X}$:
\begin{theorem}
The direct limit of $\widetilde{DH}_{n,r}(X)$, as $r\to\infty$, is a coarse invariant of $X$.
\end{theorem}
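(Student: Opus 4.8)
The plan is to show that a coarse equivalence $X \to Y$ induces an isomorphism $\varinjlim_r \wdhom{n}{r}{X} \cong \varinjlim_r \wdhom{n}{r}{Y}$, by reducing the statement to three facts: that a coarse map induces a homomorphism between these direct limits (coarse functoriality), that uniformly close coarse maps induce the same homomorphism (coarse homotopy invariance), and that these two facts together force coarse equivalences to induce isomorphisms. The essential new observation, beyond the Eilenberg-Steenrod functoriality of each $\dhom{n}{r}{\cdot}$ for Lipschitz maps, is that a coarse map interacts with the scale parameter $r$ by dilating it through its control function.

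First I would record how a coarse map $f \colon X \to Y$, with nondecreasing control function $\rho$ satisfying $d_Y(f(x),f(x')) \le \rho(d_X(x,x'))$, acts on the discrete chain complexes. Since a discrete singular $n$-cube at scale $r$ is determined by the proximity (adjacency) data of the images of the lattice points of the cube, and $f$ sends points at distance $\le r$ to points at distance $\le \rho(r)$, postcomposition with $f$ carries a scale-$r$ cube in $X$ to a scale-$\rho(r)$ cube in $Y$; this is a chain map, hence induces $f_* \colon \dhom{n}{r}{X} \to \dhom{n}{\rho(r)}{Y}$, and coordinatewise $f_* \colon \wdhom{n}{r}{X} \to \wdhom{n}{\rho(r)}{Y}$. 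Because $\rho$ is nondecreasing these commute with the scale-increasing transition maps, so they assemble into a cone over the target system and induce a well-defined homomorphism $f_* \colon \varinjlim_r \wdhom{n}{r}{X} \to \varinjlim_r \wdhom{n}{r}{Y}$; one then checks $(g \circ f)_* = g_* \circ f_*$ and $(\id_X)_* = \id$ at the level of colimits.

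The crux is coarse homotopy invariance: if $f, g \colon X \to Y$ are close, say $\sup_{x} d_Y(f(x), g(x)) \le C$, then $f_* = g_*$ on the colimit. For a scale-$r$ cube $\sigma$, the cubes $f \circ \sigma$ and $g \circ \sigma$ agree up to a vertexwise displacement of at most $C$, and I would build an explicit discrete prism (chain homotopy) interpolating between them that is a valid chain at scale $\max(\rho_f(r), \rho_g(r), C)$, realized by straight combinatorial segments between corresponding vertices; this is the discrete analogue of the homotopy axiom. The point requiring care is that this must be carried out simultaneously in all countably many coordinates of $\wdhom{n}{r}{\cdot}$, and it is precisely the fact that closeness is witnessed by a single constant $C$ that makes the homotopy scale $\max(\rho_f(r), \rho_g(r), C)$ uniform across all copies; thus the prism descends to the colimit and gives $f_* = g_*$.

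Granting these, coarse invariance follows formally: a coarse equivalence $f$ with coarse inverse $g$ has $g \circ f$ close to $\id_X$ and $f \circ g$ close to $\id_Y$, so $g_* \circ f_* = (g\circ f)_* = \id$ and $f_* \circ g_* = (f \circ g)_* = \id$ on the respective colimits, whence $f_*$ is an isomorphism. I expect the main obstacle to be the homotopy-invariance step, for two reasons: one must first make precise that a genuinely discontinuous coarse map induces a chain map on the scale-$r$ complexes at all, which is where the combinatorial rather than topological nature of the discrete cubes is used, and one must control the discrete homotopy uniformly over the infinite product. This last point is the reason the construction uses the full product $\wdhom{n}{r}{\cdot}$ rather than a single copy: the colimit does not commute with the infinite product, so the argument cannot be reduced to $\varinjlim_r \dhom{n}{r}{\cdot}$ coordinatewise but must be run directly on $\varinjlim_r \wdhom{n}{r}{\cdot}$, where the uniform scale bound coming from the single closeness constant $C$ is exactly what keeps the homotopy valid in all coordinates at once.
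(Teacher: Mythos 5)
Your proposal is correct and follows essentially the same route as the paper: a uniformly bornologous map dilates the scale parameter through its control function and hence induces a homomorphism of the direct systems $\wdhomn{-}$ and of their limits, close (bornotopy equivalent) maps induce the same map on the limit, and the two-sided-inverse argument then forces a coarse equivalence to induce an isomorphism. The only difference is one of detail: you make explicit the prism chain homotopy (valid at scale $\max(\rho_f(r),\rho_g(r),C)$, uniformly over the coordinates of the countable product) that underlies the closeness step, which the paper compresses into the assertion that the induced maps $f_{n,r}$ and $g_{n,r}$ coincide for $r$ large enough.
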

This coarse homology theory is a
relevant new object with many interesting open questions which we will study in the sequel to this paper.\\

The paper is organized as follows: in the next section, we review some terminology from discrete homotopy theory, and extend discrete homotopy theory to metric spaces. 
We also introduce discrete homology theory at scale $r$. In Section~\ref{se:eilenberg-steenrod}, 
we give discrete analogues of the Eilenberg-Steenrod axioms, and prove the discrete analogue of the Mayer-Vietoris exact sequence. 
The Eilenberg-Steenrod axioms allow us to relate discrete homology with discrete homotopy equivalence, in the same way that classical homology is 
related to classical homotopy equivalence. The results in this section also justify refering to our theory as `discrete homology' theory.
Then we study the discrete analogue of the Hurewicz map in Section~\ref{se:hurewicz}, which is followed by studying different constructions in 
Section~\ref{se:constructions}. In Section~\ref{se:coarse}, we define the discrete coarse homology theory. Finally, we end the paper with some 
conclusions, and open problems.\\

\textbf{Acknowledgements.} We thank Eric Babson, Dennis Dreesen, Antoine Gournay, Thibault Pillon and Nick Wright for helpful discussions.

\section{Discrete homotopy theory and Cubical discrete homology theory}\label{se:homology}

First, we recall definitions from discrete homotopy theory of graphs, and extend them to metric spaces.
Given two metric spaces $(X,d_X)$, $(Y,d_Y)$ and $r>0$, a function $f:X\to Y$ is \emph{$r$-Lipschitz} if $d_Y(f(x_1),f(x_2))\leq rd_X(x_1,x_2)$, for all $x_1,x_2\in X$.
When $X$ is the vertex set of a connected graph $G$, there is a well-defined metric $d_G$, the length of the shortest path connecting two vertices. For connected graphs, the definition of graph 
homomorphism $f: G \to H$ appearing in \cite{Ba-Kr-La-We01} is equivalent to saying that the map $f$ is $1$-Lipschitz between the corresponding metric spaces $(V(G), d_G)$ and $(V(H), d_H)$.

In the following definition, $\{0,\ldots m\}$ is equipped with the metric $d(a,b)=|a-b|$ and the cartesian product $X\times \{0, \ldots m \}$ is equipped with the $\ell^1$-metric.
\begin{definition}
 Let $X, Y$ be metric spaces, and  let $f,g:X \to Y$ be $r$-Lipschitz maps. Then $f$ and $g$ are $r$-discrete homotopic if there exists a non-negative integer $m$ and an
 $r$-Lipschitz map $F: X \times \{0, \ldots m \} \to Y$ such that $F(-, 0) = f$ and $F(-, m) = g$.
\end{definition}
This definition generalizes the notion of discrete homotopic maps appearing in \cite{Ba-Kr-La-We01}. Since our definition of two $r$-Lipschitz maps being `homotopic' involves $\{0, \ldots, m \}$, 
a finite discrete metric space, we continue to call the resulting theory discrete homotopy theory.

Now we define the discrete fundamental group of a metric space (at scale $r$).
Let $(X,d)$ be a metric space. An \emph{r-path connecting x to y} is a finite sequence of points $x_0x_1\ldots x_nx_{n+1}$ such that $x_0=x$, $x_{n+1}=y$ and $d(x_i,x_{i+1})\leq r$, for all $i$. 
Fix a base point $p\in X$, an \emph{r-loop based at p} is an $r$-path such that $x_0=x_{n+1}=p$. Denote by $\mathcal C_r(X,p)$ the set of $r$-loops based at $p$.
The set $\mathcal C_r(X,p)$ is a group under the operation of concatenation of $r$-loops, that is,
$$
(x_0x_1\ldots x_{m}x_{m+1})(y_0y_1\ldots y_{n}y_{n+1}):=x_0x_1\ldots x_{m}x_{m+1}y_0y_1\ldots y_{n}y_{n+1}.
$$

\begin{definition}\label{defin:r-homotopy}
Every $r$-loop $x_0 x_1 \ldots x_n x_{n+1}$ is $r$-homotopy equivalent to $x_0 x_1 \ldots x_{n+1} p$. Moreover, given
 two $r$-loops $x_0 x_1 \ldots x_n x_{n+1}$ and $y_0y_1\ldots y_n y_{n+1}$ of the same length, then the $r$-loops are $r$-homotopy equivalent if there is a formal matrix
$$
\left(
  \begin{array}{ccccc}
    x_0 & x_1 & \ldots & x_n & x_{n+1} \\
    z_0^1 & z_1^1 & \ldots & z_{n}^1 & z_{n+1}^1 \\
    \vdots & \vdots & \ldots & \vdots & \vdots \\
    z_0^t & z_1^{t} & \ldots & z_{n}^{t} & z_{n+1}^t \\
    y_0 & y_1 & \ldots & y_{n}& y_{n+1} \\
  \end{array}
\right)
$$
where
\begin{itemize}
\item every row is an $r$-loop based at $p$,
\item every column is an $r$-path based at $p$.
\end{itemize} 
\end{definition}
The $r$-homotopy equivalence, denoted by $\sim_r$, is an equivalence relation on $\mathcal C_r(X,p)$ which agrees with the operation of concatenation.

\begin{definition}\label{defin:fundamental group at scale r}
The discrete fundamental group at scale $r$ is
$$
A_{1,r}(X,p):=\mathcal C_{r}(X,p)/\sim_r.
$$
\end{definition}

\begin{definition}
 A metric space $(X,d)$ is \emph{connected at scale $r>0$} if for all $x,y\in X$, there are $x_0,x_1,\ldots,x_{n},x_{n+1}\in X$ such that $x_0=x$, $x_{n+1}=y$, and $d(x_i,x_{i+1})\leq r$, for all $i$.
\end{definition}

If $(X,d)$ is connected at scale $r$, then this group does not depend on $p$. So it will be denoted by $A_{1,r}(X)$. 
Observe that if $X$ is a finite connected graph and $r=1$, then $A_{1,1}(X)$ is the discrete fundamental group of the graph as defined in \cite{Ba-Kr-La-We01}.
There are higher homotopy groups $A_{n,r}(X)$. For $r = 1$, and when $X$ is a graph, they are defined in \cite{Ba-Kr-La-We01}.
Alternatively, discrete homotopy theory is defined as a cubical homotopy theory in \cite{Ba-Ba-Lo-La06}.

Now we define our discrete homology theory. Note that discrete homotopy theory involves formal matrices, which can be thought of as a finite subspace of the metric space $\mathbb{Z}^2$. 
Higher discrete homotopy groups also involve finite subspaces of $\mathbb{Z}^n$, which can be thought of as a cubical subdivision of a cube.
With this motivation, we define our discrete homology groups in terms of Lipschitz maps from discrete cubes.

Let $Q_n$ denote the $n$-dimensional cube, represented by the set of points $\{(a_1,\ldots,a_n)\subseteq\mathbb R^n : a_i\in\{0,1\}\}$, equipped with the Hamming distance \cite{Ha50}.
\begin{definition}
A \emph{singular $(n,r)$-cube} is an $r$-Lipschitz map $\sigma : Q_n\to X$.
\end{definition}

Let $L_{n,r}(X)$ be the free abelian group generated by all singular $(n,r)$-cubes.
Let $\sigma : Q_n\to X$ be an $(n,r)$-cube, with $n\geq1$. Let $i\in\{1,\ldots,n\}$.
\begin{itemize}
\item The $i$-th front face of $\sigma$ is the singular $(n-1,r)$-cube
\[
(A_i^n\sigma)(a_1,\ldots,a_{n-1})=\sigma(a_1,\ldots,a_{i-1},0,a_i,\ldots,a_n),
\]
\item The $i$-th back face of $\sigma$ is the singular $(n-1,r)$-cube
\[
(B_i^n\sigma)(a_1,\ldots,a_{n-1})=\sigma(a_1,\ldots,a_{i-1},1,a_i,\ldots,a_n).
\]
\end{itemize}

A singular $(n,r)$-cube $\sigma$ is \emph{degenerate} if $A_i^n \sigma = B_i^n \sigma$ for some $i$. Let $D_{n,r}(X)$ be the free abelian group generated by all degenerate $(n,r)$-cubes 
and $C_{n,r}(X) = L_{n,r}(X) / D_{n,r}(X)$, whose elements are called $(n,r)$-chains in $X$.

\begin{definition}
 The \emph{boundary} of a singular $(n,r)$-cube $\sigma : Q_n\to X$ is
\[
\partial \sigma:=\sum_{i=1}^n(-1)^{i}\left(A_i^n\sigma-B_i^n\sigma\right).
\]
\end{definition}

The boundary operator extends to a group homomorphism $\partial_n : L_{n,r}(X)\to L_{n-1,r}(X)$. Since $\partial_n(D_{n,r}(X)) \subseteq D_{n-1,r}(X)$, we obtain a map $\partial_n: C_{n,r}(X)\to C_{n-1,r}(X)$ 
and a chain complex $(\mathcal{C}, \partial)$. 
\begin{definition}
Let $\dhomr{\ast}{X}$ denote the homology groups of the resulting chain complex; these are the \emph{discrete homology groups at scale $r$}.
\end{definition}

More precisely, define $B_{n,r}(X):=\text{Im}(\partial_{n+1})$, the \emph{group of (n,r)-boundaries}, and $Z_{n,r}(X):=\text{ker}(\partial_{n})$, the \emph{group of (n,r)-cycles}. 
Since $\delta_n\circ\delta_{n+1}=0$, one has $B_{n,r}(X)\subseteq Z_{n,r}(X)$ and
$$
\dhomn{X}:=Z_{n,r}(X)/B_{n,r}(X).
$$

Similarly, let $A \subset X$ be a subspace of $X$. Then one can show that $\partial_n(C_{n,r}(A)) \subseteq C_{n-1,r}(A)$, so that there is a 
map $\partial_n: C_{n,r}(X, A) \to C_{n-1, r}(X, A)$, where $C_{n,r}(X,A) = C_{n,r}(X) / C_{n,r}(A)$. The resulting homology of this chain complex is 
the \emph{relative} homology $\dhomr{\ast}{X, A}$.

\section{Discrete Analogue of the Eilenberg-Steenrod Axioms} \label{se:eilenberg-steenrod}

The purpose of this section is to show that the discrete homology theory $\dhomr{*}{X}$ satisfies a discrete analogue of the classical Eilenberg-Steenrod axioms \cite{Ei-St45} 
and a discrete analogue of the classical Mayer-Vietoris exact sequence \cite{Ma29},\cite{Vi30}. In order to prove these results, we need a notion of `discrete open cover'. 
As motivation for our definition, observe that, for the classical results, the notion of open set is not important in itself, but rather the following geometric property: 
that a simplex can be replaced with a sum of simplices, which each lie in the interior of some open set in the cover.
This is our motivation for the following discrete analogue of cover.

\begin{definition}\label{defin:n-dimensional cover}
 An \emph{n-dimensional discrete cover} at scale $r>0$ of a metric space $X$ is a family $\mathcal U=\{U_i\}$ of subsets of $X$ such that:
\begin{enumerate}
\item $X=\bigcup U_i$,
\item For all non-degenerate $(n,r)$-cubes $\sigma$, $\sigma\in U_i$ for some $i$.
\end{enumerate}

\end{definition}

Two dimensional discrete covers (for graphs and with $r=1$) have already appeared in \cite{Ba-Kr-La-We01} to prove the Seifert-van Kampen Theorem for discrete fundamental groups. 

\begin{remark}\label{rem:2-dimension does not imply n-dimension}
{\rm Two dimensional discrete covers are not necessarily $3$-dimensional discrete covers. Take the 3-dimensional Hamming cube $Q_3$, 
and consider the cover made of its six 2-dimensional faces. This is a 2-dimensional cover, but not a 3-dimensional cover.}

\end{remark}

\begin{definition}\label{defin:discrete open cover}
 A \emph{discrete cover} at scale $r$ is an $n$-dimensional discrete cover, for all $n$.
\end{definition}

Given subsets $A,B$ of $X$, consider the canonical inclusions $i^1:A \cap B \to A$, $i^2:A \cap B \to B$, $j^1:A \to A \cup B$, $j^2: B \to A \cup B$. \\

\begin{theorem}[Mayer-Vietoris sequence]
Let $X$ be a metric space and let $\{A, B\}$ be a discrete cover of $X$ at scale $r > 0$. Then we have the following long exact sequence:

\xymatrix{
 \cdots \ar[r]^-{\partial_*} & \dhomn{A \cap B}\ar[r]^-{diag} & \dhomn{A} \oplus \dhomn{B} \ar[r]^-{diff} & \dhomn{A \cup B} \ar[r]^-{\partial_*} & \dhomr{n-1}{A \cap B}\ar[r]^-{diag} & \cdots 
}

where diag = $(i^1_*, i^2_*)$ and  diff = $j^1_* - j^2_*$.

\end{theorem}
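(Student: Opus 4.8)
The plan is to obtain the sequence as the long exact homology sequence associated to a short exact sequence of chain complexes. Concretely, I would show that
\beq
0\longrightarrow C_{n,r}(A\cap B)\xrightarrow{\ \mathrm{diag}\ } C_{n,r}(A)\oplus C_{n,r}(B)\xrightarrow{\ \mathrm{diff}\ } C_{n,r}(A\cup B)\longrightarrow 0
\eeq
is exact for every $n$, where $\mathrm{diag}=(i^1_*,i^2_*)$ and $\mathrm{diff}=j^1_*-j^2_*$. Each of $i^1,i^2,j^1,j^2$ is an inclusion, hence $1$-Lipschitz, and induces a map on the associated cubical complexes; since induced maps commute with the face operators $A_i^n,B_i^n$, they commute with $\partial$, so $\mathrm{diag}$ and $\mathrm{diff}$ are morphisms of chain complexes. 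Granting exactness, the theorem then follows from the standard zig-zag (snake) lemma of homological algebra, which also produces the connecting homomorphism $\partial_*$.

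Verifying exactness splits into three parts. For injectivity of $\mathrm{diag}$, I would note that degeneracy of a singular cube is an intrinsic property of the underlying map, so a non-degenerate $(n,r)$-cube in $A\cap B$ remains a non-degenerate $(n,r)$-cube in $A$, and distinct such cubes stay distinct; hence $i^1_*\colon C_{n,r}(A\cap B)\to C_{n,r}(A)$ is injective on the basis of non-degenerate cubes, and so is $\mathrm{diag}$. For exactness in the middle, $\mathrm{diff}\circ\mathrm{diag}=j^1_*i^1_*-j^2_*i^2_*=0$ because the two composite inclusions $A\cap B\to A\cup B$ coincide; conversely, if $(a,b)$ satisfies $j^1_*a=j^2_*b$ in $C_{n,r}(A\cup B)$, then comparing coefficients on the basis of non-degenerate cubes forces every cube occurring with nonzero coefficient to have image inside both $A$ and $B$, hence inside $A\cap B$, so $(a,b)=\mathrm{diag}(c)$ for the corresponding chain $c\in C_{n,r}(A\cap B)$.

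Surjectivity of $\mathrm{diff}$ is the crux, and is precisely the step where the hypothesis that $\{A,B\}$ is a \emph{discrete} cover is used. A generator of $C_{n,r}(A\cup B)$ is a non-degenerate singular $(n,r)$-cube $\sigma\colon Q_n\to A\cup B$. Because $\{A,B\}$ is an $n$-dimensional discrete cover at scale $r$, Definition~\ref{defin:n-dimensional cover}(2) guarantees that the image of $\sigma$ lies entirely in $A$ or entirely in $B$. Assigning each such $\sigma$ to one chosen set containing it, I would split an arbitrary chain $c$ as $c=j^1_*a+j^2_*b=\mathrm{diff}(a,-b)$, where $a$ collects (with their coefficients in $c$) the cubes assigned to $A$ and $b$ those assigned to $B$; this exhibits $c$ in the image of $\mathrm{diff}$. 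This is exactly the discrete replacement for the barycentric subdivision argument of the classical proof: rather than subdividing a cube until its pieces lie in single cover elements, the definition of discrete cover builds that property in. It is here that one needs the cover to be $n$-dimensional for \emph{every} $n$ (cf.\ Remark~\ref{rem:2-dimension does not imply n-dimension}), since the chain complex uses cubes in all dimensions.

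The only real obstacle is the bookkeeping around the quotient by degenerate cubes $D_{n,r}$: I would check once and for all that degeneracy, the face operators, and the ``lies in $A$''/``lies in $B$'' dichotomy all descend consistently to $C_{n,r}=L_{n,r}/D_{n,r}$, so that the coefficient-comparison arguments above are legitimate on the basis of non-degenerate cubes. With the short exact sequence in hand, the long exact sequence and the naturality of $\partial_*$ are then purely formal.
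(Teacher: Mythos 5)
Your proposal is correct and follows essentially the same route as the paper: both exhibit the short exact sequence of chain complexes $0\to C_{*,r}(A\cap B)\to C_{*,r}(A)\oplus C_{*,r}(B)\to C_{*,r}(A\cup B)\to 0$, deduce surjectivity of $\mathrm{diff}$ from condition (2) of the definition of discrete cover (each non-degenerate $(n,r)$-cube lies entirely in $A$ or in $B$), and invoke the zig-zag lemma. You simply spell out the injectivity, middle-exactness, and degenerate-cube bookkeeping that the paper declares ``clear.''
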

\begin{proof}
 Consider the following sequence of chain complexes:
\[
\xymatrix@1{
 0 \ar[r] & C_{*,r}(A \cap B)\ar[r]^-{\mbox{diag}} & C_{*,r}(A) \oplus C_{*,r}(B) \ar[r]^-{\mbox{diff}} & C_{*,r}(A \cup B) \ar[r] & 0 
}
\]
By the zig-zag lemma, if the above sequence is short exact, then we obtain a long exact sequence in homology. Injectivity of diag, and exactness at $C_{n,r}(A) \oplus C_{n,r}(B)$ are both clear.
 To prove surjectivity of diff, consider a singular $(n,r)$-cube $\sigma$ in $A \cup B$. Since $\{A, B\}$ is a discrete cover, then $\sigma$ lies in $A$ or $B$, and hence in the image of diff.
\end{proof}
Now we give a counterexample to the Mayer-Vietoris Theorem when $\{A, B\}$ is not a discrete cover. 

\begin{example}\label{ex:no MV}
 {\rm Consider the graph $C_5$, with vertex set $\{x_1,\ldots,x_5\}$, where $x_i,x_{i+1}$ and $x_0,x_5$ are adjacent.
 Let $A=\{x_1\}$ and $B=\{x_2,\ldots,x_5\}$. Then $H_{n,1}(A)=H_{n,1}(B)=H_{n,1}(A\cap B)=\{0\}$, for all $n$, hence $H_{1,1}(C_5)=\{0\}$, if 
the Mayer-Vietoris Theorem holds for $\{A, B \}$. However, Theorem \ref{th:abelianization1} can be used to show that $H_{1,1}(C_5) = \mathbb{Z}$. }
\end{example}

\begin{definition}
Let $Met$ be the category whose objects are triples $(X,Y, r)$ where  $X$ is a metric space, $Y$ is a subspace of $X$, and $r$ is a real number. The morphisms are pairs $(f,k): (A,B,r) \to (X,Y,kr)$ where 
$f: A \to Y$ is $k$-Lipschitz, and $f(B) \subseteq Y$.
\end{definition}

We can now formulate the discrete analogue of the Eilenberg-Steenrod axioms.

\begin{definition}
 A \emph{discrete homology theory at scale $r$} consists of:
\begin{itemize}
 \item A sequence $\dhomn{-}$ of functors from $Met$ to the category of abelian groups.
 \item A natural transformation $\partial: \dhomn{X,A} \to \dhomr{n-1}{A}$
\end{itemize}
 These functors are subject to the following axioms:
\begin{enumerate}
\item (homotopy) Discrete homotopic maps induce the same map in discrete homology. That is, if $f:(X,A)\to(Y,B)$ is homotopic to $g:(X,A)\to(Y,B)$, then their induced maps are the same.
\item (excision) Let  $\{A, B\}$ be a discrete cover of $X$. Then the inclusion $(B, A\cap B)\to (X, A)$ induces isomorphisms $\dhomn{B,A\cap B} \to \dhomn{X,A}$, for all $n$.
\item (dimension) Let $P$ be the one-point space; then $\dhomn{P,\emptyset}=\{0\}$, for all $n\geq1$ and for all $r>0$.
\item (long exact sequence) Each pair $(X,A)$ induces a long exact sequence via the inclusion maps $i:A\to X$ and $j:(X,\emptyset)\to(X,A)$:

\[
\xymatrix@1{
 \ldots \ar[r] & \dhomn{A}\ar[r]^-{i_*} & \dhomn{X} \ar[r]^-{j_*} & \dhomn{X,A} \ar[r]^-{\partial_*} & \dhomr{n-1}{A}\ar[r] & \dots 
}
\]
\end{enumerate}

\end{definition}

We have defined the functors $\dhomr{\ast}{X,A}$. The natural transformations $\partial: \dhomn{X,A} \to \dhomr{n-1}{A}$ is defined as follows: Let $\sigma$ be a representative of a class in $\dhomn{X}{A}$, and let 
$\tau$ be an $n$-chain in $X$ which maps onto $\sigma$ (under the quotient map $C_{n,r}(X) \mapsto C_{n,r}(X) / C_{n,r}(A)$). Then $\partial([\sigma]) = [ \partial(\sigma)]$. One can check that $\partial(\sigma) \in C_{n-1,r}(A)$, 
and that the map is well-defined.
\begin{theorem}
\label{th:eilenbergsteenrod}
The functors
$\dhomr{\ast}{X,A}$ form a discrete homology theory.
\end{theorem}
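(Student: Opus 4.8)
The plan is to verify the four axioms in turn. I expect the dimension and long-exact-sequence axioms to be essentially formal, the excision axiom to follow almost for free at the chain level thanks to the strength of the discrete-cover hypothesis, and the homotopy axiom to carry all the real work.

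First I would dispose of the dimension and long-exact-sequence axioms together, since both are immediate from the construction. For the one-point space $P$ and $n\geq 1$, every singular $(n,r)$-cube $Q_n\to P$ is constant, so its front and back faces agree in every direction; hence every generator of $L_{n,r}(P)$ is degenerate, $C_{n,r}(P)=0$, and therefore $\dhomn{P,\emptyset}=\dhomn{P}=\{0\}$ for all $n\geq 1$ and all $r>0$ (using $C_{*,r}(\emptyset)=0$). For the long exact sequence, the point is simply to recognise the defining sequence $0\to C_{*,r}(A)\to C_{*,r}(X)\to C_{*,r}(X,A)\to 0$ as a short exact sequence of chain complexes and invoke the zig-zag lemma, with connecting map exactly the $\partial$ described just before the theorem. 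The inclusion $A\hookrightarrow X$ is an isometric embedding carrying distinct non-degenerate cubes to distinct non-degenerate cubes; since $C_{n,r}(-)$ is free on the non-degenerate cubes and degeneracy is intrinsic (it refers only to the faces of the cube), the map $C_{n,r}(A)\to C_{n,r}(X)$ is injective with cokernel $C_{n,r}(X,A)$ by definition, and naturality of the boundary is the routine part of the lemma.

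For excision I would argue that the discrete-cover hypothesis renders the classical barycentric-subdivision step unnecessary, giving an isomorphism already at the level of chain complexes. Because $C_{n,r}(X)$ is free on the non-degenerate $(n,r)$-cubes, and because condition (2) of Definition~\ref{defin:n-dimensional cover} guarantees that each such cube lies entirely in $A$ or in $B$, the inclusion-induced map $C_{n,r}(B)/C_{n,r}(A\cap B)\to C_{n,r}(X)/C_{n,r}(A)$ is surjective (every cube is equivalent, modulo $C_{n,r}(A)$, to one supported in $B$) and injective (a chain supported on $B$-cubes that lands in $C_{n,r}(A)$ must, on comparing basis coefficients, be supported on cubes lying in $A\cap B$, and a cube whose image lies in both $A$ and $B$ factors through $A\cap B$). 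Thus $C_{*,r}(B,A\cap B)\to C_{*,r}(X,A)$ is an isomorphism of chain complexes, and excision follows on passing to homology. This is precisely the step that singular homology can only reach after subdivision, and it is the requirement that a discrete cover be $n$-dimensional for \emph{all} $n$ that removes the need for it.

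The main obstacle is the homotopy axiom, where I would construct a cubical prism (chain-homotopy) operator. Given a discrete homotopy $F:X\times\{0,\dots,m\}\to Y$ from $f$ to $g$, I would first reduce to the one-step case $m=1$ by transitivity, telescoping $g_\#-f_\#=\sum_{j}\bigl(F(-,j+1)_\#-F(-,j)_\#\bigr)$ so that it suffices to treat an elementary homotopy $F:X\times\{0,1\}\to Y$. Identifying $Q_n\times\{0,1\}$ isometrically with $Q_{n+1}$ (the $\ell^1$ product of Hamming cubes is again a Hamming cube), I would set $P\sigma=F\circ(\sigma\times\id_{\{0,1\}}):Q_{n+1}\to Y$; the two faces in the new coordinate recover $g_\#\sigma$ and $f_\#\sigma$, the remaining faces are the prisms of the faces of $\sigma$, and this yields the cubical identity $\partial P+P\partial=g_\#-f_\#$ (up to the usual signs), while $P$ visibly carries degenerate cubes to degenerate cubes and so descends to $C_{*,r}$. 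The delicate point, and where I expect the genuine difficulty to lie, is the Lipschitz bookkeeping: the spatial edges of $P\sigma$ absorb a composition of two Lipschitz constants whereas the edges in the new coordinate see only the homotopy, so showing that $P\sigma$ is an admissible cube at the correct target scale requires careful tracking of the rescaling parameter $k$ built into the morphisms of $Met$. Once $P$ is shown to land in the right chain group, the identity $\partial P+P\partial=g_\#-f_\#$ exhibits $f_\#$ and $g_\#$ as chain homotopic, hence $f_*=g_*$, completing the verification of all four axioms.
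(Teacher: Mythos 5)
Your proposal is correct and follows essentially the same route as the paper: the dimension and long-exact-sequence axioms via degeneracy of cubes into a point and the zig-zag lemma, excision as a chain-level isomorphism because the discrete-cover condition forces every non-degenerate cube into $A$ or $B$ (so no subdivision is needed), and the homotopy axiom via a cubical prism operator built from the discrete homotopy $F$ (your telescoped one-step prisms are exactly the paper's sum $\Phi(\sigma)=\sum_{i=0}^{m-1}\sigma_i$). If anything, you are more explicit than the paper about the Lipschitz/rescaling bookkeeping in the prism construction, which the paper leaves implicit.
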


\begin{proof}

We prove the result item-by-item.

\begin{enumerate}

\item (homotopy) Suppose $f$ and $g$ are $k$-Lipschitz, and discrete homotopic. Let $F: X \times [m] \to Y$ be a discrete homotopy from $f$ to $g$. 
For $\sigma \in C_{n,r}(X)$, let $\Phi(\sigma) \in C_{n+1}(Y) = \sum_{i=0}^{m-1} \sigma_i$, where $\sigma_i$ is the $(n+1,kr)$-cube 
such that $A_1 \sigma_i = F(\sigma, i)$ and $B_1 \sigma_i = F(\sigma, i+1)$. The map $\Phi$ is a chain homotopy from $C_{\star, r}(X)$ to $C_{\star, kr}(Y)$; the result follows from homological algebra.

\item (Excision) Let $\mathcal U=\{A,B\}$ and let $C_{n,r}^{\mathcal U}(X)$ to be the subgroup of $C_{n,r}(X)$ generated by singular (n,r)-cubes with image in $A$ or in $B$. Then $C_{n,r}^{\mathcal U}(X)=C_{n,r}(X)$, 
for all $n$, since $\mathcal U$ is a discrete cover. Hence we have an isomorphism in homology
\begin{align}\label{eq:iso}
H^{\mathcal U}(X)\simeq \dhomn{X}.
\end{align}
Now observe that the map
$$
C_{n,r}(B)/C_{n,r}(A\cap B)\to C_{n,r}^{\mathcal U}(X)/C_{n,r}(A), 
$$
induced by inclusion, is obviously an isomorphism since both quotient groups are free with basis the singular (n,r)-cubes in B which do not lie in A. Hence, using (\ref{eq:iso}), we have an isomorphism in homology
$$
\dhomn{B,A\cap B}\simeq \dhomn{X,A}.
$$ 
\item (dimension) Observe that, for $n\geq1$ and $r>0$, every singular $(n,r)$ cube is degenerate and so we have $C_{n,r}=\{0\}$. Consequently, $\dhomn{P}$ is trivial for all $n\geq1$ and for all $r>0$.
\item (long exact sequence) As in the classical case, the maps $i$ and $j$ give rise to a short exact sequence of chain complexes; the result follows from the zig-zag lemma. Note that the zig-zag lemma gives the existence and uniqueness of 
the natural transformation $\partial: \dhomn{X,A} \to \dhomr{n-1}{A}$.
\end{enumerate}
\end{proof}

\section{Hurewicz theorem in dimension one} \label{se:hurewicz}

The aim of this section is to prove an analogue of the Hurewicz theorem for discrete homology theory;
$\dhomr{1}{X}$ is isomorphic to the abelianization of the discrete fundamental group of $X$ at scale $r$.
Definitions \ref{defin:r-homotopy} and \ref{defin:fundamental group at scale r} are used in this section.

\begin{theorem}\label{th:abelianization1}
Let $(X,d)$ be a metric space which is connected at scale $r$. Then $\dhomr{1}{X}$ is isomorphic to the abelianization of $A_{1,r}(X)$.
\end{theorem}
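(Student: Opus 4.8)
The plan is to construct an explicit isomorphism $\Psi \colon A_{1,r}(X)^{\mathrm{ab}} \to \dhomr{1}{X}$ by sending an $r$-loop to the $1$-chain given by the sum of its constituent edges, then check that this is a well-defined group homomorphism, that it factors through the abelianization, and finally that it is both surjective and injective. Fix a basepoint $p$ (legitimate since $X$ is connected at scale $r$). Given an $r$-loop $x_0 x_1 \cdots x_n x_{n+1}$ with $x_0 = x_{n+1} = p$, each consecutive pair $(x_i, x_{i+1})$ satisfies $d(x_i, x_{i+1}) \le r$, so it determines a singular $(1,r)$-cube $e_i \colon Q_1 \to X$ with $e_i(0) = x_i$, $e_i(1) = x_{i+1}$. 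Define $\Psi([\gamma]) = \bigl[\sum_{i=0}^{n} e_i\bigr] \in \dhomr{1}{X}$. First I would verify that this sum is a $1$-cycle: $\partial_1 e_i = -(A_1^1 e_i - B_1^1 e_i)$ evaluates to the difference of the endpoint $0$-cubes, so the sum telescopes and the boundary vanishes because the loop returns to $p$. Thus $\Psi$ lands in $Z_{1,r}(X)$, and since $\dhomr{0}{X}$ plays no role here the class in homology is well defined.

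Next I would show $\Psi$ respects concatenation: concatenating two loops simply appends their edge-lists, so the corresponding chains add, and $\Psi$ is a homomorphism from $\mathcal{C}_r(X,p)$ to the abelian group $\dhomr{1}{X}$. Because the target is abelian, $\Psi$ automatically factors through $A_{1,r}(X)^{\mathrm{ab}}$ once I confirm it factors through $\sim_r$. The crux of well-definedness is therefore to check that two $r$-homotopy equivalent loops map to homologous $1$-cycles. For the elementary move of Definition~\ref{defin:r-homotopy} (appending a constant step $x_{n+1} p = p\,p$), the extra edge is a degenerate $(1,r)$-cube, hence zero in $C_{1,r}(X)$. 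For the formal-matrix move, I would read the matrix as a $(2,r)$-cube, or rather a sum of such $(2,r)$-cubes built from the $2\times 2$ blocks of adjacent rows and columns; the rows-and-columns conditions guarantee each block is a genuine singular $(2,r)$-cube, and the boundary of the sum of these $2$-cubes is exactly the difference between the top-row cycle and the bottom-row cycle (all the interior vertical edges cancel in pairs, and the boundary column-edges are based at $p$ and degenerate). Hence the two cycles differ by an element of $B_{1,r}(X)$ and are homologous.

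Surjectivity comes from the observation that, modulo degenerate cubes and boundaries, every $(1,r)$-cube $e$ from $a$ to $b$ can be rewritten as a loop through $p$: choosing $r$-paths $\alpha$ from $p$ to $a$ and $\beta$ from $b$ to $p$, the cycle condition forces any $1$-cycle to decompose into a sum of such loop-edges, so every class in $Z_{1,r}(X)/B_{1,r}(X)$ is hit. The main obstacle, and the step I would spend the most care on, is injectivity: I must show that if $\Psi([\gamma]) = 0$, i.e. $\sum e_i = \partial_2 c$ for some $2$-chain $c = \sum_j n_j \tau_j$, then $\gamma$ is $r$-homotopically trivial. The strategy is to translate the algebraic relation $\sum e_i \in B_{1,r}(X)$ back into a sequence of formal-matrix moves: I would build an inverse homomorphism $\Phi \colon \dhomr{1}{X} \to A_{1,r}(X)^{\mathrm{ab}}$ by sending each $(1,r)$-cube $e$ to the class of the loop $\alpha_a \cdot e \cdot \beta_b$ (where $\alpha_a,\beta_b$ are fixed $r$-paths to/from $p$), check that $\Phi$ kills degenerate cubes and boundaries—this last check, that the boundary of each singular $(2,r)$-cube $\tau$ maps to the identity, is precisely where the formal matrix of Definition~\ref{defin:r-homotopy} is reconstructed from the four faces of $\tau$—and then verify that $\Phi$ and $\Psi$ are mutually inverse. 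Establishing that $\Phi$ is well defined on boundaries is the technical heart: it requires carefully matching the four faces $A_1^2\tau, B_1^2\tau, A_2^2\tau, B_2^2\tau$ of a $(2,r)$-cube, connected by basepoint paths, into a single formal-matrix $r$-homotopy, and handling the fixed choices of $\alpha,\beta$ so that they cancel.
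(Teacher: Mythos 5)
Your proposal is correct and follows essentially the same route as the paper: the same edge-sum map, the same formal-matrix-to-$(2,r)$-chain argument for well-definedness, and the same basepoint-conjugation trick for surjectivity. Your injectivity step, phrased as constructing an inverse $\Phi$ on the abelianization, is just a reorganization of the paper's direct verification that $\ker\phi$ equals the commutator subgroup --- the technical heart (showing the conjugated boundary of each singular $(2,r)$-cube is $r$-null-homotopic and that the coefficients cancel) is identical.
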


\begin{proof}
We construct a map
$$
\phi:A_{1,r}(X,p)\to \dhomr{1}{X}
$$
such that
\begin{enumerate}
\item $\phi$ is a group homomorphism,
\item $\phi$ is surjective,
\item $\text{Ker}(\phi)=[A_{1,r}(X,p),A_{1,r}(X,p)]$, the commutator subgroup of $A_{1,r}(X,p)$.\\
\end{enumerate}

\textbf{Step 1. Definition of $\phi$.}
Let $[\gamma]\in A_{1,r}(X,p)$ and choose a representative $\gamma=x_0x_1\ldots x_nx_{n+1}$, with $x_0=x_{n+1}=p$, such that $x_i\neq x_{i+1}$, for all $i$. 
This condition and the fact that $d(x_i,x_{i+1})\leq r$ imply that the maps $\sigma_i:Q_1\to X$, defined as $\sigma_i(0)=x_i$, $\sigma_i(1)=x_{i+1}$, are non-degenerate $(1,r)$-cubes, 
for all $i$. Therefore $\sum_i\sigma_i\in C_{1,r}(X)$. Now observe that
$$
\partial\left(\sum_i\sigma_i\right)=(x_1-x_0)+(x_2-x_1)+\ldots+(x_{n+1}-x_n)=0,
$$
since $x_0=x_{n+1}$. Consequently $\sum_i\sigma_i\in Z_{1,r}(X)$. Denote by $\overline\gamma$ the image of $\sum_i\sigma_i$ down in $\dhomr{1}{X}$ and define $\phi([\gamma])=\overline\gamma$.\\

\textbf{Step 2. $\phi$ is well defined.}
Let $\gamma_1=px_1x_2\ldots x_mp$ and $\gamma_2=py_1y_2\ldots y_np$ be two $r$-homotopy equivalent $r$-loops. Therefore, we have a homotopy matrix
$$
\left(
  \begin{array}{ccccc}
    p & x_1 & \ldots & x_s & p \\
    p & z_1^1 & \ldots & z_{s}^1 & p \\
    \vdots & \vdots & \ldots & \vdots & \vdots \\
    p & z_1^{t} & \ldots & z_{s}^{t} & p \\
    p & y_1 & \ldots & y_{s}& p \\
  \end{array}
\right)
$$
Observe that all \emph{squares} $\{p,p,z_1^1,x_1\}, \{x_1,z_1^1,z_2^1,x_2\}, \ldots$ forming this homotopy matrix are in fact singular $(2,r)$-cubes. Consider 
the $(2,r)$-chain $\tau$ given by summing over all the squares appearing in the matrix.
To compute the boundary, observe that the sides that belong inside the matrix are ran once in one 
direction and once in the other, so that the boundary of $\tau$ is equal to the boundary of the matrix, which is $\gamma_1\gamma_2^{-1}$. 
In other words, $\overline\gamma_1=\overline\gamma_2$.\\

\textbf{Step 3. $\phi$ is a group homomorphism.}
This follows by direct computation. Indeed, if $\gamma_1=px_1x_2\ldots x_mp$ and $\gamma_2=py_1y_2\ldots y_np$. One has
$$
\overline\gamma_1=\sum_i\sigma_i,\qquad\qquad\overline\gamma_2=\sum_i\mu_i,
$$
where $\sigma_i(0)=x_i, \sigma_i(1)=x_{i+1}, \mu_i(0)=y_i, \mu_i(1)=y_{i+1}$. Consequently,
\begin{align}\label{eq:first}
\overline\gamma_1+\overline\gamma_2=\sum_i\sigma_i+\sum_i\mu_i.
\end{align}
On the other hand, $\gamma_1\gamma_2=px_1\ldots x_mpy_1\ldots y_np$. Therefore,
$$
\overline{\gamma_1\gamma_2}=\sum_i\nu_i,
$$
where $\nu_i=\sigma_i$, for $i\in\{0,\ldots,m\}$, and $\nu_{m+i}=\mu_i$, for $i=\{0,\ldots,n\}$. This is then clearly equal to (\ref{eq:first}).\\

\textbf{Step 4. $\phi$ is surjective.}
Let $\lambda$ be a singular $(1,r)$-cycle, say $\lambda=\sum_{i=1}^kn_i\sigma_i$, where $\sigma_i:\{0,1\}\to X$ are such that $d(\sigma_i(0),\sigma_i(1))\leq r$, for all $i$. Since $\partial\lambda=0$, one has
\begin{align}\label{eq:boundary}
\sum_{i=1}^kn_i\left(\sigma_i(1)-\sigma_i(0)\right)=0.
\end{align}
Let $S:=\{\sigma_i(0),\sigma_i(1) : i=1,\ldots,k\}$. For a given $q\in S$, 
\begin{itemize}
\item let $m_q$ be the sum of the coefficients of $q$ in (\ref{eq:boundary}). Observe that $m_q=0$, by Equation (\ref{eq:boundary}).
\item let $\beta_q$ be an $r$-path connecting $p$ and $q$.
\end{itemize}
For all $i=1,\ldots,k$, define the $r$-loop $\eta_i:=\beta_{\gamma_i(0)}\sigma_i\beta_{\gamma_i(1)}$ and define 
$$
\gamma:=\eta_1^{n_1}\eta_2^{n_2}\ldots\eta_k^{n_k}.
$$
We now show that $\phi(\gamma)=\lambda$. Indeed,
\begin{align*}
\phi(\gamma)\\
&=\sum_{i=1}^kn_i\sigma_i-\sum_{i=1}^kn_i(\beta_{\gamma_i(1)}-\beta_{\gamma_i(0)})\\
&=\sum_{i=1^k}n_i\sigma_i-\sum_{q\in S}m_q\beta_q\\
&=\sum_{i=1}^kn_i\sigma_i\\
&=\lambda.
\end{align*}
\textbf{Step 5. The kernel of $\phi$ is equal to the commutator subgroup of $A_{1,r}$.}
Since $\text{Im}(\phi)$ is an abelian subgroup of $\dhomr{1}{X}$, then $[A_{1,r},A_{1,r}]\subseteq \text{ker}(\phi)$. 
To prove the other inclusion, let $\gamma$ be an $r$-loop based at $p$ whose homotopic class $[\gamma]$ belongs to $\text{ker}(\phi)$. 
Since $\overline\gamma=0$, then $\gamma$ is the boundary of a $(2,r)$-chain $\sigma=\sum_{i=1}^kn_i\sigma_i$, where 
$$
\sigma_i:\{(0,0),(1,0),(0,1),(1,1)\}\to X
$$
are such that the distance of the images of two adjacent vertices is at most $r$. Now, write $\partial\sigma_i$ as the sum of its faces; that is,
$$
\partial\sigma_i=a_i^++b_i^++a_i^-+b_i^-,
$$
where
\begin{align}\label{eq:faces1}
a_i^+(0)=b_i^-(1)=\sigma_i(0,0),\qquad\qquad a_i^+(1)=b_i^+(0)=\sigma_i(1,0),
\end{align}
\begin{align}\label{eq:faces2}
a_i^-(0)=b_i^+(1)=\sigma_i(1,1),\qquad\qquad a_i^-(1)=b_i^-(0)=\sigma_i(0,1).
\end{align}
We have
\begin{align}\label{eq:identity}
\gamma=\partial\left(\sum_in_i\sigma_i\right)=\sum_in_i(a_i^++b_i^++a_i^-+b_i^-).
\end{align}
Fix the following notation. Let
$$
L:=\{(a_i^+,b_i^+,a_i^-,b_i^-) : i=1,\ldots,k\},
$$
and let $S$ be the set of endpoints of the $(1,r)$-chain $a_i^+,b_i^+,a_i^-$, and $b_i^-$. For all $q\in S$, let $\eta_q$ be an $r$-path joining $p$ and $q$. 
For all $\theta\in L$, let $m_\theta$ be the sum of the coefficients of $\theta$ in Equation (\ref{eq:identity}).
Now, for each singular $(2,r)$-cube, consider the following four $(2,r)$-paths:
$$
\beta_{a_i^+(0)}a_i^+\beta_{a_i^+(1)}^{-1},\qquad\qquad \beta_{b_i^+(0)}b_i^+\beta_{b_i^+(1)}^{-1},
$$
$$
\beta_{a_i^-(0)}a_i^-\beta_{a_i^-(1)}^{-1},\qquad\qquad \beta_{b_i^-(0)}b_i^-\beta_{b_i^-(1)}^{-1}.
$$
Denote by $\eta_i$ the concatenation of all these paths. Using Equations (\ref{eq:faces1}) and \ref{eq:faces2}, we can show that $\eta_i$ is $r$-homotopy equivalent to the constant loop. Indeed
\begin{align*}
\eta_i & = & \beta_{a_i^+(0)}a_i^+\beta_{a_i^+(1)}^{-1}\beta_{b_i^+(0)}b_i^+\beta_{b_i^+(1)}^{-1}\beta_{a_i^-(0)}a_i^-\beta_{a_i^-(1)}^{-1}\beta_{b_i^-(0)}b_i^-\beta_{b_i^-(1)}^{-1}\\
& = & \beta_{a_i^+(0)}a_i^+\beta_{a_i^+(1)}^{-1}\beta_{a_i^+(1)}b_i^+\beta_{b_i^+(1)}^{-1}\beta_{b_i^+(1)}a_i^-\beta_{a_i^-(1)}^{-1}\beta_{a_i^-(1)}b_i^-\beta_{b_i^-(1)}^{-1}\\
& = & \beta_{a_i^+(0)}a_i^+b_i^+a_i^-b_i^-\beta_{a_i^+(0)}^{-1}.
\end{align*}
To see that this latter loop is homotopy equivalent to the constant loop, write first $\beta_{a_i^+(0)}=px_1x_2\ldots x_m\sigma_i(0,0)$, so as we have
\begin{align*}
\eta_i & = 
& px_1x_2\ldots x_m\sigma_i(0,0)\sigma_i(1,0)\sigma_i(1,1)\sigma_i(0,1)\sigma_i(0,0)x_mx_{m-1}\ldots x_2x_1p \\
& \equiv & px_1x_2\ldots x_m\sigma_i(0,0)\sigma_i(1,0)\sigma_i(1,0)\sigma_i(0,0)\sigma_i(0,0)x_mx_{m-1}\ldots x_2x_1p\\
& \equiv & px_1x_2\ldots x_m\sigma_i(0,0)\sigma_i(0,0)\sigma_i(0,0)\sigma_i(0,0)\sigma_i(0,0)x_mx_{m-1}\ldots x_2x_1p\\
& \equiv & p.
\end{align*}
Now $\gamma = (\gamma_0, \ldots, \gamma_k, \gamma_0)$. Let $\tau_i$ be the singular $(1,r)$-cube such that $\tau_i(0) = \gamma_i$ and $\tau_i(1) = \gamma_{i+1}$. 
Let $\omega$ be the concatenation $\prod_{i=0}^k\beta_{\tau_i(0)} \tau_i \beta_{\tau_i(1)}^{-1}$, which is homotopy equivalent to $\gamma$.
Now set $\delta=\eta_1^{n_1}\eta_2^{n_2}\ldots\eta_k^{n_k}$, and we see that $\delta \omega^{-1}$ is homotopy equivalent to $\gamma^{-1}$. 
However, for each $\theta \in L$, the loop $\beta_{\theta(0)} \theta \beta_{\theta(1)}^{-1}$ appears $m_{\theta}$ times in $\delta$, and $-m_{\theta}$ times 
in $\omega^{-1}$. Thus each such loop appears in $\delta \omega^{-1}$ with exponent summing to $0$; hence $\delta \omega^{-1}$ map to the identity in the abelianization 
of $A_{1,r}(X)$. Thus $[\gamma]$ is in the commutator of $A_{1,r}(X)$. 
\end{proof}

\section{Groups appearing as homology groups}\label{se:constructions}

In this section, we describe groups which appear as discrete homology groups (and discrete homotopy groups). 
Our main result is Corollary \ref{cor:groups appearing as homology groups}, that is based on facts of intrinsic interest: 
the classical fundamental group of a compact, smooth, metrizable, path-connected manifold is the discrete 
fundamental group of a metric space (and thus the first homology group is also a discrete homology group). We also give a construction which is an analogue of suspension.

Let $M$ be a compact, metrizable, smooth, and path connected manifold. Let $\ell>0$ be the infimum of the lengths of all non-contractible loops on $M$. 
We say that a rectangulation $R$ of the manifold is `fine enough' if each side of the rectangulation has length at most $\ell/5$.

\begin{theorem}\label{th:discretization}
Let $R$ be a `fine enough' rectangulation of a compact, metrizable, smooth, and path-connected manifold $M$. Let $X$ be the natural graph associated to 
the rectangulation $R$. Then $\pi_1(M) \simeq A_{1,1}(X)$.
\end{theorem}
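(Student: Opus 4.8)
The plan is to build mutually inverse homomorphisms $\Phi\colon A_{1,1}(X)\to\pi_1(M)$ and $\Psi\colon\pi_1(M)\to A_{1,1}(X)$. Fix a length metric on $M$ inducing its topology; by compactness $\ell>0$ (and if $M$ is simply connected then $\ell=\infty$, every rectangulation is fine enough, and both groups vanish, so assume $\ell<\infty$). Everything rests on what I would call the \emph{contraction lemma}: by the definition of $\ell$, every loop in $M$ of length $<\ell$ is null-homotopic; moreover any loop contained in a single cell of $R$, or in the star of a vertex, is null-homotopic because such a set is contractible. Every use of the hypothesis that each side of $R$ has length $\le\ell/5$ will be to force the auxiliary loops built below either to have length $<\ell$ or to sit inside one contractible cell or star.

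First I would define $\Phi$. Identify the vertices and edges of $X$ with the $0$- and $1$-cells of $R$, so a $1$-loop $x_0x_1\cdots x_{n+1}$ based at the vertex $p$ is a closed edge-walk and $A_{1,1}(X)$ is the discrete fundamental group of the graph. Realize such a walk as the loop in $M$ that traverses, for each $i$, the rectangulation edge from $x_i$ to $x_{i+1}$ (a constant when $x_i=x_{i+1}$), and let $\Phi([\gamma])$ be its based class. Well-definedness reduces to the two generating relations of $\sim_1$: appending the basepoint contributes only a constant segment, while an elementary homotopy-matrix move replaces one row by an adjacent one, and each formal square of the matrix has all four sides equal to $0$ or to an edge of $R$, hence a boundary loop of length $\le 4\ell/5<\ell$ which bounds a disk by the contraction lemma; stacking these disks homotopes the two realizations. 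Concatenation of loops realizes to concatenation of walks, so $\Phi$ is a homomorphism.

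Next I would define $\Psi$. Given a based loop $c\colon[0,1]\to M$, choose $0=t_0<\cdots<t_N=1$ fine enough (uniform continuity, below a Lebesgue number) that each $c([t_j,t_{j+1}])$ lies in one cell $\kappa_j$; at each $t_j$ pick a vertex $v_j$ of the common face $\overline{\kappa_{j-1}}\cap\overline{\kappa_j}$ (with $v_0=v_N=p$) and join $v_j$ to $v_{j+1}$ by an edge-path inside $\kappa_j$. The resulting closed walk is a $1$-loop; set $\Psi([c])$ to be its class. The real work is well-definedness. Two choices of subdivision or of the vertices $v_j$ differ by loops each confined to a single cell or star, hence trivial in $A_{1,1}(X)$ after translating the contraction lemma into homotopy matrices. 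Homotopy invariance is the crux: given $H\colon[0,1]^2\to M$, I would take a grid with mesh below the Lebesgue number, so each grid square maps into one cell, snap every grid vertex to a vertex of a containing cell, and join neighbors by edge-paths inside common cells; the resulting array is a legal homotopy matrix (rows are $1$-loops, columns $1$-paths, neighbors adjacent by construction), exhibiting the required $\sim_1$ between the two boundary walks. Concatenating subdivisions shows $\Psi$ is a homomorphism.

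Finally I would check $\Phi\circ\Psi=\id$ and $\Psi\circ\Phi=\id$. For $\Phi\circ\Psi$, on each subinterval the realization of the approximating walk stays in the cell $\kappa_j$ together with $c$, so $c$ and $\Phi\Psi([c])$ differ by loops confined to single cells and are homotopic by the contraction lemma. For $\Psi\circ\Phi$, running the snapping procedure on the piecewise-geodesic realization of a walk $\gamma$ returns $\gamma$ up to the well-definedness moves already established. I expect the main obstacle to be the homotopy invariance of $\Psi$: turning a continuous homotopy into a valid homotopy matrix requires simultaneously keeping all horizontal and vertical neighbors adjacent in $X$ and keeping every auxiliary loop either short (of length $<\ell$) or inside one contractible cell. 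This is bookkeeping rather than a single hard idea, but it is exactly where the fineness constant $\ell/5$ is spent, and where both the compactness of $M$ (positivity of $\ell$, uniform continuity, a positive Lebesgue number) and the precise fineness of $R$ are genuinely used.
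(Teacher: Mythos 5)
Your proposal is correct and follows essentially the same route as the paper: discretize continuous loops and homotopies via uniform continuity on the compact manifold, realize discrete walks and homotopy matrices as edge-paths and squares in $M$, and use the fineness bound $\ell/5$ to make every auxiliary square a contractible loop of length $<\ell$. The only difference is organizational --- you package the argument as two explicit mutually inverse homomorphisms, while the paper defines the single discretization map $\pi_1(M)\to A_{1,1}(X)$ and verifies it is well defined, injective, surjective, and multiplicative --- but the underlying ideas coincide.
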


\begin{proof}

Let $p\in X$ be a base point. We can associate to every loop in $M$ based at $p$ a discrete 
loop in $X$ by the following procedure. Let $\gamma:[0,1]\rightarrow M$ be a continous loop in $M$ based at $p$. For every $t\in[0,1]$ associate to $\gamma(t)$ one of its closest points in $X$. 
By continuity of $\gamma$, one can do this in such a way to obtain a 1-loop $\tilde\gamma=x_0x_1\ldots x_n$ in $R$. This procedure preserves the homotopy equivalence. Indeed, let $H:[0,1]\times[0,1]\rightarrow M$ 
be an homotopy between loops $\gamma_0$ and $\gamma_1$ in $M$. Set $\widetilde H_{\frac{i}{N}}:=\widetilde H\left(\frac{i}{N},t\right)$. Since $M$ is a 
compact metric space, by the Heine-Cantor theorem, one can find $N>0$ such that $\widetilde{H}_\frac{i}{N}$ and $\widetilde {H}_{\frac{i+1}{N}}$ are 
consecutive steps of a discrete homotopy in $R$, for all $i$.The sequence $\widetilde\gamma_0\widetilde {H}_{\frac 1N}\widetilde {H}_{\frac 2N}\ldots \widetilde {H}_{1-\frac1N}\widetilde\gamma_1$ 
then gives rise to a homotopy between $\widetilde{\gamma}_0$ and $\widetilde{\gamma}_1$.

Note that the converse does not hold a priori for an arbitrary triangulation and this is why we need a \emph{fine enough} rectangulation. For such rectangulations, we have that if $\tilde\gamma_0=px_1\ldots x_sp$ and $\tilde\gamma_1=py_1\ldots y_sp$ 
are discrete homotopy equivalent, then $\gamma_0$ and $\gamma_1$ are homotopy equivalent. 
Indeed, let  $\widetilde{\gamma}_0$ and $\widetilde{\gamma}_1$ be two loops in $X$ that are homotopy equivalent and consider a homotopy matrix between them.

$$
\left(
  \begin{array}{ccccc}
    p & x_1 & \ldots & x_s & p \\
    p & z_1^1 & \ldots & z_{s}^1 & p \\
    \vdots & \vdots & \ldots & \vdots & \vdots \\
    p & z_1^{t} & \ldots & z_{s}^{t} & p \\
    p & y_1 & \ldots & y_{s}& p \\
  \end{array}
\right)
$$

Now, to every row and every column of the homotopy matrix, associate the path in $M$ that is obtained by 
connecting adjacent points in the homotopy matrix via the edge in the rectangulation connecting them. 
Because no rectangule of the rectangulation can contain a non-contractible loop, we know that the loop associated this way to the first row is 
homotopy equivalent to $\gamma_0$ and the loop associated to the last row is homotopy equivalent to $\gamma_1$. Moreover, we know that all \emph{squares} $ppz_1^1x_1p$, $x_1z_1^1z_2^1x_2^1x_1^1$, etc. 
do not contain any contractible loop in $M$, because of our choice of the rectangulation. In other words, all these squares, seen as loops in $M$ as above, are contractible and therefore, 
they can be filled by a standard homotopy. Gluing all these homotopies, one obtain a standard homotopy between the loop associated to the first row (that is homotopy equivalent to $\gamma_0$) 
and the loop associated to the last row (that is homotopy equivalent to $\gamma_1$). In conclusion, $\gamma_0$ and $\gamma_1$ are homotopy equivalent in $M$.

Since it is easily seen that the correspondance we defined between paths in $M$ and discrete paths in $X$ is surjective and preserves concatenation, it then gives rise to an isomorphism 
between the fundamental groups of $M$ and the discrete fundamental group of $X$, as claimed. 

\end{proof}

Next we give a construction that is similar to suspension of a topological space.
\begin{definition}\label{defin:suspension}
Let $X$ be a metric space, and $r > 0$. Take $Y = X \times \{0, 1, 2, 3 \}$, equipped with the $\ell^1$-metric, and then quotient $Y$ by identifying all points 
whose second coordinate is $0$, and identifying all points whose second coordinate is $3$. Denote this metric space by $X_s$. Let $0$ ($t$, respectively) denote the class 
in $X_s$ corresponding to the points of $Y$ where the second coordinate is $0$ ($3$, respectively).
\end{definition}

\begin{proposition}\label{prop:suspension}
For all $n$, we have $\dhomn{X} \simeq \dhomr{n+1}{X_s}$.
\end{proposition}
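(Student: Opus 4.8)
The plan is to realize $X_s$ as a union of two discretely contractible cones and feed this into the Mayer--Vietoris sequence. Let $A$ be the image in $X_s$ of $X\times\{0,1,2\}$ (the ``lower cone'', containing the apex $0$ together with the two middle copies of $X$) and let $B$ be the image of $X\times\{1,2,3\}$ (the ``upper cone'', containing the apex $t$). Then $A\cup B=X_s$ and $A\cap B$ is the image of $X\times\{1,2\}$.

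First I would record the three geometric inputs. (i) Each cone is discretely contractible: the map $F((x,j),s)=(x,\max(j-s,0))$ pushes $A$ onto the apex $0$ and is $r$-Lipschitz for $r\geq 1$ (each unit step in $s$ moves a point a vertical distance at most $1$), and symmetrically for $B$; by the homotopy axiom of Theorem~\ref{th:eilenbergsteenrod} the groups $\dhomn{A}$ and $\dhomn{B}$ agree with the homology of a point and hence vanish in positive degrees. (ii) The overlap $A\cap B=X\times\{1,2\}$ deformation retracts onto the single layer $X\times\{1\}\cong X$ by collapsing the second layer onto the first, again an $r$-Lipschitz discrete homotopy, so that $\dhomn{A\cap B}\cong\dhomn{X}$. (iii) With these inputs the Mayer--Vietoris sequence collapses, for $n\geq 1$, to a short segment $0\to\dhomr{n+1}{X_s}\to\dhomn{A\cap B}\to 0$, yielding $\dhomr{n+1}{X_s}\cong\dhomn{A\cap B}\cong\dhomn{X}$. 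The degree-zero case is exceptional and must be compared separately against the connectivity of $X$ and $X_s$, since unreduced $\dhomr{0}{-}$ of a connected space is $\mathbb{Z}$.

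The main obstacle, and the step that genuinely uses the four-level construction, is verifying that $\{A,B\}$ is a discrete cover at scale $r$ in the strong sense of Definition~\ref{defin:discrete open cover}: that for \emph{every} $n$ each non-degenerate $(n,r)$-cube lands entirely in $A$ or entirely in $B$. A cube fails this only when its image meets both apexes $0$ and $t$, and since $d_{X_s}(0,t)=3$ this cannot happen in low dimensions; the delicate point is that for $n\geq 3$ an $r$-Lipschitz cube can in principle reach from $0$ to $t$. The heart of the argument is therefore a structural lemma: any non-degenerate cube whose image contains both apexes is forced, by the Lipschitz condition across the vertical gaps, to collapse its middle layers onto a single vertical fibre $\{0,(x_0,1),(x_0,2),t\}$. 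One then disposes of such cubes either by arranging the cover so they do not occur, or by observing that the subcomplex they generate is acyclic (each fibre is a contractible interval), so that the inclusion of the $\{A,B\}$-small chains into all chains remains a homology isomorphism. This reduction to vertical fibres is where I expect the real work to lie; once the cover condition is secured, inputs (i)--(iii) assemble into the claimed isomorphism.
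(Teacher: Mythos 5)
Your decomposition is the same as the paper's up to relabelling (the paper takes $A=X_s\setminus\{0\}$ and $B=X_s\setminus\{t\}$, i.e.\ your two cones), and the overall strategy---two discretely contractible cones meeting in a copy of $X$, fed into Mayer--Vietoris---is exactly the paper's. You are also right to flag $n=0$: with unreduced homology the statement needs adjustment (for $X$ a point, $X_s$ is a discrete path, so $\dhomr{1}{X_s}=0$ while $\dhomr{0}{X}=\mathbb{Z}$), a point the paper's one-line proof does not address. Where you and the paper both run into trouble is the discrete-cover condition, and here your diagnosis is better than your cure. The condition genuinely fails: for any $r\geq 1$ and any $y\in X$, the map $\sigma:Q_3\to X_s$ sending $(a_1,a_2,a_3)$ to the point of the fibre $\{0,(y,1),(y,2),t\}$ at level $a_1+a_2+a_3$ is $1$-Lipschitz and non-degenerate, and its image contains both apexes, so it lies in neither cone---precisely the phenomenon of Remark~\ref{rem:2-dimension does not imply n-dimension}. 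The hypothesis of the Mayer--Vietoris theorem is therefore simply not satisfied, and some form of ``small cubes'' argument is unavoidable; the paper asserts the cover claim without proof, so your instinct that the real work lies here is correct.

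However, the structural lemma you propose in order to carry this out is false as stated. A non-degenerate cube meeting both apexes need not collapse onto a single vertical fibre: take $x,x'\in X$ with $d_X(x,x')=1$, set $x_0=x$, $x_1=x'$, and define the $(4,1)$-cube $\sigma(a_1,a_2,a_3,a_4)$ to be the apex $0$ or $t$ when $a_1+a_2+a_3$ equals $0$ or $3$, and $(x_{a_4},\,a_1+a_2+a_3)$ otherwise. This is $1$-Lipschitz (check adjacent vertices), non-degenerate in every direction, hits both apexes, and uses two distinct fibres. So the subcomplex generated by the ``bad'' cubes is not a disjoint union of contractible intervals, and proving that the inclusion of $\{A,B\}$-small chains into all chains is a quasi-isomorphism requires a genuine subdivision or pushing argument that you have not supplied (and that the paper does not supply either). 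Until that step is carried out, the proof is incomplete, although you have correctly located the one place where real work is needed.
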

\begin{proof}
 Let $A = X_s \setminus \{0 \}$, and $B = X_s \setminus \{t \}$. We claim that $\{A, B \}$ forms a discrete cover. Note that $A \cap B$ is $r$-homotopy equivalent to $X$, and 
$A, B$ are both contractible. The result then follows from the Mayer-Vietoris exact sequence.
\end{proof}

\begin{corollary}\label{cor:groups appearing as homology groups}
 For all abelian group $G$ and for all $\bar n\in\mathbb N$, there is a finite connected simple graph $\Gamma$ such that
$$
\dhom{n}{1}{\Gamma}= \begin{cases} G,\qquad\text{if }n=\bar n  \\ 0,\qquad\text{if }n\leq\bar n \end{cases}
$$
\end{corollary}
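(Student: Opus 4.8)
The plan is to build $\Gamma$ in two stages: first realize $G$ as the first discrete homology group of a finite graph, and then raise the degree from $1$ to $\bar n$ by iterating the suspension construction of Definition~\ref{defin:suspension}. Before starting, observe that since $\Gamma$ is finite, each chain group $C_{n,1}(\Gamma)$ is finitely generated, so every $\dhom{n}{1}{\Gamma}$ is finitely generated; I therefore take $G$ to be a finitely generated abelian group, say $G \cong \mathbb{Z}^k \oplus \mathbb{Z}/d_1 \oplus \cdots \oplus \mathbb{Z}/d_m$ (for non–finitely-generated $G$ no finite graph can work, so this is the only possible reading of the statement).

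For the base case $\bar n = 1$, I would realize $G$ as the fundamental group of a closed manifold and then discretize. Let $M = (S^1)^k \times L(d_1,1) \times \cdots \times L(d_m,1)$, a product of circles and lens spaces; this is a compact, smooth, metrizable, path-connected manifold with $\pi_1(M) \cong G$ (already abelian). Choosing a fine enough rectangulation $R$ and letting $X$ be its associated graph, Theorem~\ref{th:discretization} gives $A_{1,1}(X) \cong \pi_1(M) \cong G$, and then Theorem~\ref{th:abelianization1} gives $\dhom{1}{1}{X} \cong G^{\mathrm{ab}} = G$. Since $X$ is connected at scale $1$ we have $\dhom{0}{1}{X} \cong \mathbb{Z}$ (so the reduced group in degree $0$ vanishes). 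This settles $\bar n = 1$.

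For the inductive step I would set $\Gamma = X_s^{(\bar n - 1)}$, the $(\bar n - 1)$-fold suspension of $X$, and apply Proposition~\ref{prop:suspension} repeatedly. The isomorphism $\dhom{n}{1}{Y} \cong \dhom{n+1}{1}{Y_s}$ shifts the nontrivial group up one degree at each stage, so after $\bar n - 1$ suspensions the group $G$ lands in degree $\bar n$. To see that the degrees strictly below $\bar n$ remain trivial, I would read off the low-degree terms of the Mayer--Vietoris sequence driving Proposition~\ref{prop:suspension}: with $A,B$ contractible and $A \cap B \simeq Y$ connected, exactness gives $\dhom{2}{1}{Y_s} \cong \dhom{1}{1}{Y}$ and forces $\dhom{1}{1}{Y_s} = 0$. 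Combined with $\dhom{n}{1}{Y_s} \cong \dhom{n-1}{1}{Y}$ for $n \geq 2$, this yields the clean recursion: if $\dhom{n}{1}{Z} = 0$ for $1 \leq n < j$ and $\dhom{j}{1}{Z} \cong G$, then $\dhom{n}{1}{Z_s} = 0$ for $1 \leq n < j+1$ and $\dhom{j+1}{1}{Z_s} \cong G$. Iterating from $j=1$ produces $\dhom{\bar n}{1}{\Gamma} \cong G$ with all of $\dhom{n}{1}{\Gamma} = 0$ for $1 \leq n < \bar n$, as required. One also checks that suspension keeps us inside the category of finite simple graphs: $X \times \{0,1,2,3\}$ with the $\ell^1$-metric is the Cartesian product of $X$ with a path, and collapsing the two end copies to single points yields a graph whose shortest-path metric agrees with the quotient metric.

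The main obstacle is the low-degree bookkeeping in the suspension step together with the degree-$0$ convention: the suspension isomorphism reads cleanly only for reduced homology (or for $n \geq 1$), so one must verify through the Mayer--Vietoris computation that the nontrivial group never reappears below degree $\bar n$ and that the persistent term $\dhom{0}{1}{-} \cong \mathbb{Z}$ is precisely the one the statement tacitly treats as reduced. Confirming that each suspension genuinely produces a finite simple graph, rather than merely a metric space, is the other point requiring care; by contrast the base case is routine once Theorems~\ref{th:discretization} and~\ref{th:abelianization1} are available, and the higher degrees $n > \bar n$ need no attention since the statement makes no claim there.
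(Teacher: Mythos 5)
Your proposal follows exactly the route the paper intends for this corollary: realize $G$ as $\pi_1$ of a compact manifold, discretize via Theorem~\ref{th:discretization}, pass to homology via Theorem~\ref{th:abelianization1}, and then shift degree with iterated suspension via Proposition~\ref{prop:suspension}. Your added caveats (that $G$ must be finitely generated for a finite graph, and the reduced-homology reading of the degree-$0$ term) are correct observations about the statement but do not change the argument, which matches the paper's.
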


\begin{corollary}\label{cor:discrete sphere}
 There is graph $S^n$ with the property
$$
\dhom{k}{1}{S^n}= \begin{cases} \mathbb{Z},\qquad\text{if }k= n  \\ 0,\qquad\text{if }k\neq n \end{cases}
$$
\end{corollary}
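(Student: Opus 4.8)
The plan is to build $S^n$ by iterated suspension and to read off its homology one degree at a time from Proposition \ref{prop:suspension}. For the base I take $S^0$ to be two points $a,b$ at mutual distance strictly greater than $1$. Because no two distinct points of $S^0$ lie within distance $1$, every $1$-Lipschitz map $Q_k\to S^0$ with $k\ge1$ is constant, hence degenerate; thus $C_{k,1}(S^0)=0$ for all $k\ge1$, so $\dhom{k}{1}{S^0}=0$ for $k\ge1$, while $\dhom{0}{1}{S^0}\cong\mathbb Z^2$ since $S^0$ has two scale-$1$ components. I then set $S^n:=(S^{n-1})_s$ for $n\ge1$, the suspension of Definition \ref{defin:suspension}. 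Each $S^n$ is a finite metric space, and since discrete homology at scale $1$ depends only on the pairs at distance $\le1$ (a $1$-Lipschitz map from $Q_k$ is the same for the metric and for the associated shortest-path graph metric), I may replace it by the finite graph on its vertex set whose edges are exactly those pairs; for $n\ge1$ this graph is connected. A direct check shows $S^1$ is the $6$-cycle.

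Next I run an induction on $n\ge1$ with hypothesis $\dhom{k}{1}{S^n}\cong\mathbb Z$ for $k=n$, $\dhom{k}{1}{S^n}=0$ for $1\le k\ne n$, and $\dhom{0}{1}{S^n}\cong\mathbb Z$. For degrees $k\ge2$ Proposition \ref{prop:suspension} applies directly: $\dhom{k}{1}{S^n}\cong\dhom{k-1}{1}{S^{n-1}}$, which by the inductive hypothesis (and by the vanishing of the higher homology of $S^0$ when $n=1$) is $\mathbb Z$ precisely when $k=n$ and $0$ otherwise. The degree-$0$ entry is $\mathbb Z$ because $S^n$ is connected. Everything therefore reduces to computing $\dhom{1}{1}{S^n}$, which is exactly the index where Proposition \ref{prop:suspension} cannot be quoted, since there the suspension isomorphism would falsely predict $\dhom{1}{1}{S^n}\cong\dhom{0}{1}{S^{n-1}}$.

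The degree-$1$ computation is the main obstacle, and it is handled by the tail of the Mayer--Vietoris sequence for the cover $\{A,B\}$ of $S^n=(S^{n-1})_s$ supplied by Proposition \ref{prop:suspension}, where $A,B$ are contractible and $A\cap B$ is $r$-homotopy equivalent to $S^{n-1}$. The relevant segment reads
\[
0=\dhom{1}{1}{A}\oplus\dhom{1}{1}{B}\longrightarrow \dhom{1}{1}{S^n}\stackrel{\partial_*}{\longrightarrow}\dhom{0}{1}{S^{n-1}}\stackrel{\mathrm{diag}}{\longrightarrow}\dhom{0}{1}{A}\oplus\dhom{0}{1}{B},
\]
so $\dhom{1}{1}{S^n}\cong\ker(\mathrm{diag})$. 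For $n\ge2$ the base $S^{n-1}$ is connected, so $\dhom{0}{1}{S^{n-1}}\cong\mathbb Z$ and $\mathrm{diag}$ sends a generator to $(1,1)$; it is injective, whence $\dhom{1}{1}{S^n}=0$, as required. For $n=1$ the base $S^0$ is disconnected, $\dhom{0}{1}{S^0}\cong\mathbb Z^2$, and since both components of $S^0$ become connected inside each contractible piece, $\mathrm{diag}$ sends each of the two generators to $(1,1)$; its kernel is the rank-one subgroup generated by their difference, giving $\dhom{1}{1}{S^1}\cong\mathbb Z$. Thus it is precisely the disconnectedness of the base $S^0$ that seeds the single copy of $\mathbb Z$, which the higher suspensions then carry up in degree; choosing a connected one-dimensional base such as $C_5$ would instead force a direct and delicate computation of its higher homology, where nondegenerate ``fold'' cubes obstruct the naive vanishing $C_{k,1}=0$.

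Finally I would record the one discrepancy with the literal statement: for any nonempty graph $\dhom{0}{1}{-}$ is nonzero, so the displayed table holds verbatim only in reduced homology (equivalently, it holds as stated for all $k\ge1$, with $\dhom{0}{1}{S^n}\cong\mathbb Z$ carried along). Granting this convention, the induction produces a finite graph $S^n$ whose discrete homology at scale $1$ equals $\mathbb Z$ in degree $n$ and vanishes in all other positive degrees, which proves the corollary and simultaneously realizes the $G=\mathbb Z$ case of Corollary \ref{cor:groups appearing as homology groups} with control in every degree.
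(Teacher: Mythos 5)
Your derivation is correct relative to the paper's stated results, and it follows the route the corollary is clearly meant to follow (the paper gives no proof): iterate the suspension of Definition \ref{defin:suspension} from a two-point base and read off homology from Proposition \ref{prop:suspension}. Where you genuinely go beyond the paper is at the bottom of the ladder, and your additions are right: Proposition \ref{prop:suspension} taken literally at $n=0$ would give the false prediction $\dhom{1}{1}{X_s}\simeq\dhom{0}{1}{X}$ (it is a reduced-homology statement there), and you correctly replace it by the tail of the Mayer--Vietoris sequence, extracting $\dhom{1}{1}{S^1}\simeq\mathbb Z$ from the disconnectedness of $S^0$ and $\dhom{1}{1}{S^n}=0$ for $n\ge 2$ from the connectedness of $S^{n-1}$. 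Your base-case computation, your observation that the table cannot literally hold at $k=0$, and your replacement of the metric space $(S^{n-1})_s$ by an honest graph (legitimate because a $1$-Lipschitz map out of $Q_k$ is determined by the relation ``distance $\le 1$'') are all correct and are details the paper omits but the statement, which asserts the existence of a \emph{graph}, actually needs.

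One caveat, which you inherit from the paper rather than introduce: the unproved claim in Proposition \ref{prop:suspension} that $\{A,B\}$ is a discrete cover fails in dimensions $\ge 3$. Since $d(0,t)=3$ equals the Hamming diameter of $Q_3$, the ``level map'' $Q_3\to X_s$ sending a vertex of weight $w$ to the class of $(x,w)$ for a fixed $x\in X$ is $1$-Lipschitz, non-degenerate, and lies in neither $A$ nor $B$; so the excision/Mayer--Vietoris hypothesis is violated and the suspension isomorphism in degrees $\ge 2$, on which your inductive step relies, needs repair. Your degree-$1$ computation is unaffected: no $(k,1)$-cube with $k\le 2$ can reach both poles, so the chain-level sequence is exact in degrees $0,1,2$, which is all the zig-zag lemma requires for the segment of the long exact sequence you use.
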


\begin{remark}
 Let $X, Y$ be \emph{extended} metric spaces. This means that $d_X: X \times X \mapsto [0, \infty]$ and similarly fro $d_Y$. Then we can define an extended metric space on $X \sqcup Y$:
\[ d_{X \sqcup Y}(a,b) = \left \{ \begin{array}{cc} d_X(a,b) & a, b \in X \\ d_Y(a,b) & a,b \in Y \\ \infty & \end{array} \right. \]

Then clearly $\dhomn{X \sqcup Y} \simeq \dhomn{X} \times \dhomn{Y}$. For any sequence $G_i$ of abelian groups, there is an extended metric space $X$ such that $\dhomn{X} \simeq G_n$. 
The construction uses disjoint union (which is why we must work with extended metric spaces).

\end{remark}

\section{A new coarse homology theory}\label{se:coarse}

As mentioned in the Introduction, the main motivation to work with metric spaces is to construct a new coarse invariant. Since a typical coarse object is a sequence of finite objects, 
the idea is to take the direct limit, as $r$ goes to infinity, of the countably infinite direct product of the discrete homology theory.

To make this idea formal, we first recall some basic facts about direct limits of groups. A direct system of groups is a net $(G_r, \phi_{r,s})_{r,s\in R, r\leq s}$, 
where $R$ is a directed set, $G_r$ are groups, and $\phi_{r,s}:G_r\to G_s$ are group homomorphisms such that $\phi_{s,t}\circ\phi_{r,s}=\phi_{r,t}$, for all $r\leq s\leq t$, and $\phi_{r,r}=Id_{G_r}$, 
for all $r$. In our case, the directed set $R$ is an unbounded interval of positive real numbers $R=[r_0,\infty)$.

\begin{definition}\label{defin:homomorphism of direct systems}
 Let $(G_r,\phi_{r,s})$ and $(H_r,\psi_{r,s})$ be two direct systems of groups. A homomorphism $\alpha:(G_r,\phi_{r,s})\to(H_r,\psi_{r,s})$ of direct systems is a net of group homomorphisms $\alpha_r:G_r\to H_{\rho_r}$ such that
\begin{itemize}
\item the function $r\to\rho_r$ is increasing,
\item for all $r\leq s$, one has
$$
\psi_{\rho_r,\rho_s}\circ\alpha_r=\alpha_s\circ\phi_{r,s}
$$
\end{itemize} 
\end{definition}

Let $(G_r,\phi_{r,s})$ be a direct system of groups. The direct limit $G=\underrightarrow\lim G_r$ is the 
quotient of the disjoint union of all the $G_r$'s modded out by the following equivalence relation: 
$g\in G_r$ is equivalent to $h\in G_s$ if there is $t\geq r,s$ such that $\phi_{r,t}(g)=\phi_{s,t}(h)$.

\begin{remark}\label{rem:directsystemhomomorphism}
{\rm  Let $(G_r,\phi_{r,s})$ and $(H_r,\psi_{r,s})$ be two direct systems of groups and let $G$ and $H$ be respectively their 
direct limits. A homomorphism $\alpha:(G_r,\phi_{r,s})\to(H_r,\psi_{r,s})$ of direct systems induces a canonical group homomorphism 
$\alpha_*:G\to H$ as follows. Given $[g]_{\sim}\in G$, assume that the representative $g$ belongs to $G_r$, and define $\alpha_*([g]_{\sim})=[\alpha_r(g)]_{\sim}$. 
One easily checks that $\alpha_*$ is well defined and it is in fact a group homomorphism from $G$ to $H$.}
\end{remark}

Given two homomorphisms $\alpha:(G_r,\phi_{r,s})\to(H_r,\psi_{r,s})$ and $\beta:(H_r,\psi_{r,s})\to(K_r,\chi_{r,s})$, the composition $\beta\circ\alpha:(G_r,\phi_{r,s})\to(K_r,\chi_{r,s})$ 
is defined
through the net of homomorphisms $(\beta\circ\alpha)_r:=\beta_{\rho_r}\circ\alpha_r$, and 
is a homomorphism of direct systems of groups. 
The following also holds:
\begin{align}\label{eq:composition}
(\beta\circ\alpha)_*=\beta_*\circ\alpha_*.
\end{align}

We need the following Lemma:
\begin{lemma}\label{lem:directlimitisomorphism}
Let $\alpha, \beta:(G_r,\phi_{r,s})\to(H_r,\psi_{r,s})$ be two homomorphisms and suppose there exists $r \in R$, such that for all $s \geq r$, we have 
$\alpha_s = \beta_s$. Then $\alpha_* = \beta_*$.
\end{lemma}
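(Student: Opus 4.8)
The plan is to verify $\alpha_* = \beta_*$ pointwise on the direct limit $G$, using the explicit formula for the induced map recorded in Remark \ref{rem:directsystemhomomorphism}. Recall from that remark that for a class $[g]_\sim \in G$ with representative $g \in G_s$ one has $\alpha_*([g]_\sim) = [\alpha_s(g)]_\sim$, and that this value is independent of the chosen representative. It therefore suffices to show that $\alpha_*$ and $\beta_*$ take the same value on an arbitrary class.

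Fix a class $[g]_\sim \in G$ and a representative $g$, say $g \in G_{s_0}$. The essential move is that we are free to replace the representative by one whose index is as large as we like: since $R$ is a directed set, choose $s \geq s_0$ with $s \geq r$, and put $g' := \phi_{s_0, s}(g) \in G_s$. Then $[g']_\sim = [g]_\sim$; indeed, taking $t = s$ in the defining equivalence relation and using $\phi_{s,s} = \id_{G_s}$ gives $\phi_{s_0,s}(g) = \phi_{s,s}(g')$. By the representative-independence established in Remark \ref{rem:directsystemhomomorphism}, we thus obtain
$$
\alpha_*([g]_\sim) = [\alpha_s(g')]_\sim, \qquad \beta_*([g]_\sim) = [\beta_s(g')]_\sim.
$$

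Now the hypothesis applies: because $s \geq r$, the two nets agree at index $s$, that is $\alpha_s = \beta_s$, so $\alpha_s(g') = \beta_s(g')$ and hence $[\alpha_s(g')]_\sim = [\beta_s(g')]_\sim$. This yields $\alpha_*([g]_\sim) = \beta_*([g]_\sim)$, and since $[g]_\sim$ was arbitrary, $\alpha_* = \beta_*$.

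There is no serious obstacle in this argument; the only point requiring any care is the first step, namely the observation that every class in the direct limit admits a representative of arbitrarily large index. This is precisely what lets us reach an index $s \geq r$ where the assumed agreement $\alpha_s = \beta_s$ can be brought to bear. Everything else is a direct application of the well-definedness of the induced map already proved in Remark \ref{rem:directsystemhomomorphism}.
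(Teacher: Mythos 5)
Your proof is correct and follows essentially the same route as the paper's: pick a representative at an index $s \geq r$ and apply the hypothesis $\alpha_s = \beta_s$ there. The only difference is that you explicitly justify the paper's ``without loss of generality'' by pushing the representative forward via $\phi_{s_0,s}$, which is a welcome bit of added care but not a different argument.
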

\begin{proof}
 
Let $G, H$ be the direct limits of $(G_r,\phi_{r,s})$ and $(H_r,\psi_{r,s})$ respectively, and let $[g]_\sim \in G$. Without loss of generality, suppose we have a 
representative $g \in G_s$ with $s \geq r$. Then $\alpha_s(g) = \beta_s(g)$, so we have $\alpha_*[g]_\sim = [\alpha_s(g)]_\sim = [\beta_s(g)]_\sim = \beta_*[g]_\sim$.
\end{proof}

A metric space is \emph{proper} if closed balls are compact.

\begin{definition}\label{def:UBP}
 Let $S:(0,\infty)\to(0,\infty)$ be a function. A map $f:X\to Y$ between proper metric spaces is \emph{uniformly S-bornologous} if for all $r>0$ one has
$$
d_X(x_1,x_2)<r\qquad\qquad\text{implies}\qquad\qquad d_Y(f(x_1),f(x_2))<S(r).
$$
A map $f:X\to Y$ is called uniformly bornologous if there is a function $S$ such that $f$ is uniformly $S$-bornologous. 
\end{definition}

We recall that a map $f:X\to Y$ between proper metric space is called \emph{proper} if inverse images of compact sets are compact. Let us introduce some acronyms:
\begin{itemize}
\item $UBP(X,Y)$ is the set of uniformly bornologous and proper maps $f:X\to Y$.
\item $UBP_S(X,Y)$ is the set of uniformly $S$-bornologous and proper maps $f:X\to Y$.
\end{itemize}

\begin{definition}\label{defin:bornotopic}
 Let $f,g\in UBP(X,Y)$. They are \emph{bornotopy equivalent} if there is $R>0$ such that
$$
d_Y(f(x),g(x))<R\qquad\qquad\text{for all }x\in X
$$ 
\end{definition}

\begin{definition}\label{defin:bornotopicequivalence}
 A map $f\in UBP(X,Y)$ is a \emph{bornotopy equivalence} if there is $g\in UBP(Y,X)$ such that $f\circ g$ is bornotopy equivalent to
 Id$_Y$ and $g\circ f$ is bornotopy equivalent to $\id_X$. Two spaces are called bornotopy equivalent if there is a bornotopy equivalence between them. 
\end{definition}

We said at the beginning of this section that coarse geometry is, informally speaking, the study of a metric space from a large scale point of view. 
Formally, coarse geometry is the study of the category whose objects are proper metric spaces and whose morphisms are bornologous maps. Therefore, 
to define a coarse homology theory, we need an homology theory for proper metric spaces that are \emph{coarsely connected} and such that this theory is invariant under bornotopy equivalence.

\begin{definition}\label{defin:coarsely connected}
{\rm A metric space is coarsely connected if it is connected at some scale $r>0$.}
\end{definition}

Throughout the rest of this section, $X$ will be always assumed coarsely connected and the index $r$ will be always assumed to be large enough such that $X$ is connected at scale $r$.

The idea to coarsen the discrete homology theory is very simple: a coarse chain will be a sequence of discrete chains, modded out by bounded sets. Formally, we first need to take the 
direct product of countably many copies of $\dhomn{X}$ and then the direct limit over $r$. So, first set $\wdhomn{X}:=\Pi_{k=1}^{\infty}\dhomn{X}$. For $r\leq s$, denote $\iota_{n,r,s}:\dhomn{X}\to \dhom{n}{s}{X}$ 
the inclusion and let $\tilde\iota_{n,r,s}=(\iota_{n,r,s})_k: \wdhomn{X}\to \wdhom{n}{s}{X}$ be the component-wise homomorphism. Clearly, for every fixed $n$, $(\wdhomn{X},\tilde\iota_{n,r,s})$ is a 
direct system of groups and so we can define

\begin{align}\label{eq:coarse homology}
\dhomi{X}:=\underrightarrow\lim \wdhomn{X}.
\end{align}

\begin{examples}
{\rm 
\begin{enumerate}
\item Every bounded metric space $X$ has trivial $\dhomi{X}$, coherently with the fact that bounded metric spaces are bornotopy equivalent to one point and so they are trivial from a coarse point of view.
\item An example of a metric space $X$ with non trivial $\dhom{1}{\infty}{X}$ is the coarse Hawaiian earring. This space, whose suggestive name has been advised by Nick Wright, is the following coarse analogue of the 
standard Hawaiian earring. Consider the space $X=\bigcup X_n$, where $X_n$ is the circle in the Euclidean plane with center in the point $(0,n^2)$ and passing through the origin of the plane. 
The space $X$, equipped with the subspace-metric inherited by $\mathbb R^2$ is easily seen to have non trivial $\dhom{1}{\infty}{X}$.
\end{enumerate}
}
\end{examples}

Our aim is to prove the following

\begin{theorem}\label{th:invariance}
If $X,Y$ are coarsely connected proper metric spaces which are bornotopy equivalent through the map $f:X\to Y$, then 
$$
\dhomi{X} \simeq \dhomi{Y},\qquad\qquad\text{for all n,}
$$
and the isomorphism is given by a canonical map $f_*$.
\end{theorem}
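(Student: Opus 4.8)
The plan is to show that the assignment $f \mapsto f_*$ is a functor from the coarse category to abelian groups that is invariant under bornotopy, and then read off the isomorphism from a two-sided coarse inverse of $f$. Concretely, I would isolate three facts: (a) every uniformly bornologous map induces a homomorphism of the direct systems $(\wdhomn{-}, \tilde\iota_{n,r,s})$, hence a canonical map $f_* : \dhomi{X} \to \dhomi{Y}$; (b) this assignment sends identities to identities and respects composition via \eqref{eq:composition}; and (c) if $f$ and $g$ are bornotopy equivalent (Definition \ref{defin:bornotopic}) then $f_* = g_*$. Granting these, since $f$ is a bornotopy equivalence (Definition \ref{defin:bornotopicequivalence}) there is $g \in UBP(Y,X)$ with $f\circ g$ bornotopy equivalent to $\id_Y$ and $g\circ f$ bornotopy equivalent to $\id_X$; then (b) and (c) give $f_* \circ g_* = (f\circ g)_* = (\id_Y)_* = \id$ and $g_* \circ f_* = \id$, so $f_*$ is the desired canonical isomorphism.

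For (a), to a bornologous $f$ I attach its expansion function $\rho_r^f := \sup\{ d_Y(f(x_1),f(x_2)) : d_X(x_1,x_2) \le r \}$, which is finite (bounded by $S(r')$ for any $r' > r$) and non-decreasing in $r$. If $\sigma : Q_n \to X$ is an $(n,r)$-cube then $f\circ\sigma$ moves adjacent vertices of $Q_n$ by at most $\rho_r^f$, hence is $\rho_r^f$-Lipschitz for the Hamming metric, i.e. an $(n,\rho_r^f)$-cube. Post-composition commutes with the face operators $A_i^n, B_i^n$, therefore with $\partial$, and it carries degenerate cubes to degenerate cubes, so $\sigma \mapsto f\circ\sigma$ descends to chain maps $C_{n,r}(X) \to C_{n,\rho_r^f}(Y)$ and induces $f_{n,r} : \dhomn{X} \to \dhom{n}{\rho_r^f}{Y}$. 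Taking the countable product gives $\tilde f_{n,r}$, and the square relating it to the inclusions $\tilde\iota$ commutes at the chain level (both composites equal $\sigma \mapsto f\circ\sigma$ viewed at the larger scale), so $\tilde f = (\tilde f_{n,r})$ is a homomorphism of direct systems; Remark \ref{rem:directsystemhomomorphism} then produces $f_*$. Functoriality follows from $(g\circ f)\circ \sigma = g\circ(f\circ\sigma)$ together with \eqref{eq:composition}.

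The heart of the argument is (c), and it is where the one genuine obstacle lies: $f$ and $g$ are only uniformly bornologous, not Lipschitz, so one cannot apply the homotopy axiom of Theorem \ref{th:eilenbergsteenrod} to $f$ and $g$ directly---there is no Lipschitz homotopy $X \times \{0,\ldots,m\} \to Y$ between them. Instead I would build a chain homotopy cube by cube, exploiting that precomposition with a Lipschitz cube confines $f$ and $g$ to finitely many controlled points. Suppose $d_Y(f(x),g(x)) < R$ for all $x$, and set $s := \max(R, \rho_r^f, \rho_r^g)$. For each $(n,r)$-cube $\sigma$ define $H_\sigma : Q_{n+1} = Q_1 \times Q_n \to Y$ by $H_\sigma(0,a) = f(\sigma(a))$ and $H_\sigma(1,a) = g(\sigma(a))$; its horizontal edges have length at most $\rho_r^f$ or $\rho_r^g$ and its vertical edges length $< R$, so $H_\sigma$ is an $(n+1,s)$-cube whose remaining faces are $H_{A_i\sigma}$ and $H_{B_i\sigma}$. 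The assignment $\Phi(\sigma) := H_\sigma$ preserves degeneracy and, by the same boundary computation as in the homotopy axiom, satisfies $\partial \Phi + \Phi \partial = g_\# - f_\#$ as maps $C_{*,r}(X) \to C_{*,s}(Y)$. Hence $f_{n,r}$ and $g_{n,r}$ become equal after inclusion into scale $s$, that is $\iota_{n,\rho_r^f,s}\circ f_{n,r} = \iota_{n,\rho_r^g,s}\circ g_{n,r}$, and likewise componentwise on the products. Evaluating $f_*$ and $g_*$ on a class represented in $\wdhomn{X}$, this common inclusion witnesses the equivalence defining the target direct limit, giving $f_* = g_*$; this is the mild extension of Lemma \ref{lem:directlimitisomorphism} in which the two direct-system homomorphisms agree only after a further inclusion rather than on the nose.

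Two routine points remain, to be dispatched briefly: all constructions are performed componentwise on $\wdhomn{-} = \prod_{k} \dhomn{-}$, so equality of the induced maps on each factor yields equality on the product; and since direct limits depend only on a cofinal tail, it is harmless that $r \mapsto \rho_r^f$ need only carry the connectivity scales of $X$ into those of $Y$ for $r$ large. The main obstacle is precisely the cube-by-cube construction of $\Phi$ in step (c): verifying that $H_\sigma$ is a legitimate $(n+1,s)$-cube even though $f$ and $g$ are non-Lipschitz, that $\Phi$ respects degeneracy, and that it yields the chain-homotopy identity at the correct scale $s$.
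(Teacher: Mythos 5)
Your proof is correct and follows the same overall route as the paper: induced homomorphisms of the direct systems $(\wdhomn{-},\tilde\iota_{n,r,s})$, functoriality via Equation \eqref{eq:composition}, invariance under bornotopy, and then the formal argument that a two-sided coarse inverse of $f$ yields mutually inverse maps $f_*$ and $g_*$. The one place where you diverge is the invariance step, and there your version is more complete than the paper's. The paper's Proposition \ref{prop:samehomology} rests on the bare assertion that ``for large enough $r$, the maps $f_{n,r}$ and $g_{n,r}$ are the same, since \dots\ their images are point-wise at uniformly bounded distance''; pointwise closeness does not by itself give equality of the induced maps on homology, and the missing ingredient is exactly the prism operator $\Phi(\sigma)=H_\sigma$ you construct, which yields $\partial\Phi+\Phi\partial=g_\#-f_\#$ at scale $s=\max(R,\rho_r^f,\rho_r^g)$ and hence shows that $f_{n,r}$ and $g_{n,r}$ agree after inclusion into $\dhom{n}{s}{Y}$. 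You are also right to flag that this needs the mild strengthening of Lemma \ref{lem:directlimitisomorphism} in which the two system homomorphisms agree only after composing with a further structure map; since that still identifies the images in the direct limit, $f_*=g_*$ follows. The remaining points (preservation of degeneracy, componentwise extension to the countable product, cofinality in $r$) are routine and you handle them correctly, so the proposal stands as a valid --- indeed slightly more rigorous --- proof of the theorem.
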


We start with a simple observation. Let $f$ be a uniformly $S$-bornologous map from $X$ to $Y$. Fix $r$ large enough such that $X$ is connected at 
scale $r$. Since $f$ is uniformly $S$-bornologous, the image under $f$ of any singular $(n,r)$-cube in $X$ is a singular $(n,S(r))$-cube in $Y$. Also, 
$f$ maps degenerate cubes to degenerate cube and therefore $f$ drops to a quotient homomorphism $\overline f_{n,r}:C_{n,r}(X)\to C_{n,S(r)}(Y)$. 
This homomorphism maps $ker(\partial_n)\subseteq C_{n,r}(X)$ to $\ker(\partial_n)\subseteq C_{n,S(r)}(Y)$ and $\text{Im}(\partial_{n+1})\subseteq C_{n,r}(X)$ 
to $\text{Im}(\partial_{n+1})\subseteq C_{n,S(r)}(Y)$. Therefore, for large $r$ we have group homomorphisms $f_{n,r}:\dhomn{X}\to \dhom{n}{S(r)}{Y}$, which induce component-wise 
homorphisms $\tilde{f}_{n,r}:\wdhomn{X}\to\wdhom{n}{S(r)}{Y}$. This net of homomorphisms is clearly an homomorphism of direct systems and therefore 
we can apply Remark \ref{rem:directsystemhomomorphism} and get the following proposition.

\begin{proposition}\label{prop:UBPinduceshomo}
Let $f:X\to Y$ be a UBP map between coarsely connected metric spaces. Then, for all $n$, the natural map $f_{n,*}:\dhomi{X} \to \dhomi{Y}$ is a group homomorphism.
\end{proposition}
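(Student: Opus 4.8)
The plan is to package the component-wise maps $\tilde{f}_{n,r}$ constructed in the paragraph preceding the statement into a single \emph{homomorphism of direct systems} in the sense of Definition \ref{defin:homomorphism of direct systems}, and then simply read off the conclusion from Remark \ref{rem:directsystemhomomorphism}. Concretely, for each sufficiently large $r$ we already have the map $\tilde{f}_{n,r}\colon\wdhomn{X}\to\wdhom{n}{S(r)}{Y}$; setting $\rho_r:=S(r)$, the two things that remain to be checked are: (i) that each $\tilde{f}_{n,r}$ is a genuine, well-defined group homomorphism on homology, and (ii) that the family $\{\tilde{f}_{n,r}\}$ satisfies the compatibility square $\tilde\iota_{n,S(r),S(s)}\circ\tilde{f}_{n,r}=\tilde{f}_{n,s}\circ\tilde\iota_{n,r,s}$ for all $r\leq s$, where the left-hand inclusion lives in the target ($Y$) direct system.

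For (i), I would verify that $\overline{f}_{n,r}$ is a chain map, i.e.\ that $\partial_n\circ\overline{f}_{n,r}=\overline{f}_{n-1,r}\circ\partial_n$. This is immediate from the fact that $f$ commutes with the face operators: since a face of $f\circ\sigma$ is just $f\circ\sigma$ restricted to a facet of $Q_n$, one has $A_i^n(f\circ\sigma)=f\circ A_i^n\sigma$ and $B_i^n(f\circ\sigma)=f\circ B_i^n\sigma$, and summing over $i$ with the boundary signs gives the chain-map identity. A chain map carries cycles to cycles and boundaries to boundaries, so $\overline{f}_{n,r}$ descends to $f_{n,r}\colon\dhomn{X}\to\dhom{n}{S(r)}{Y}$, and the countable product $\tilde{f}_{n,r}$ is then a group homomorphism. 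For (ii), it suffices to check commutativity on generators: both composites send the class of a singular $(n,r)$-cube $\sigma$ to the class of the singular $(n,S(s))$-cube $f\circ\sigma$, since applying $\tilde{f}_{n,r}$ and then including via $\tilde\iota_{n,S(r),S(s)}$ yields $f\circ\sigma$ read at scale $S(s)$, while including $\sigma$ via $\tilde\iota_{n,r,s}$ first and then applying $\tilde{f}_{n,s}$ yields the same cube. Hence the square commutes on generators and therefore on all of $\wdhomn{X}$.

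The one genuine subtlety, and the step I expect to require the most care, is the requirement in Definition \ref{defin:homomorphism of direct systems} that $r\mapsto\rho_r$ be increasing; this is also exactly what makes the target inclusions $\tilde\iota_{n,S(r),S(s)}$ well-posed, since they only exist when $S(r)\leq S(s)$. Because $S$ is an arbitrary function it need not be monotone, so I would first replace it by the non-decreasing function $S'(r):=\sup\{d_Y(f(x),f(y)) : d_X(x,y)<r\}$, which is non-decreasing by construction and satisfies $S'(r)\leq S(r)<\infty$; as $f$ is then uniformly $S'$-bornologous, I may use $\rho_r=S'(r)$ throughout, and the verifications of (i) and (ii) go through verbatim. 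With $\{\tilde{f}_{n,r}\}$ now established as a homomorphism of direct systems, Remark \ref{rem:directsystemhomomorphism} produces the canonical group homomorphism $f_{n,*}\colon\dhomi{X}\to\dhomi{Y}$, which is the assertion of the proposition.
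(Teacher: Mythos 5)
Your proposal is correct and follows essentially the same route as the paper: the paper's proof of this proposition is precisely the paragraph preceding the statement (the induced chain maps $\overline f_{n,r}$, their descent to homology and to the countable products $\tilde f_{n,r}$, and the appeal to Remark \ref{rem:directsystemhomomorphism}), with the verification that these form a homomorphism of direct systems dismissed as ``clear.'' Your extra care in checking the chain-map identity, the compatibility squares, and in replacing $S$ by a non-decreasing majorant so that $r\mapsto\rho_r$ is genuinely increasing fills in details the paper leaves implicit.
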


Now let $f\in UBP_{S_f}(X,Y)$ and $g\in UBP_{S_g}(X,Y)$ be bornotopy equivalent maps. Let $r>0$ and denote $M_r=\max\{S_f(r),S_g(r)\}$. As before, we have systems of group homomorphisms
$$
f_{n,r}:\dhomn{X} \to \dhom{n}{M_r}{Y}\qquad\qquad\text{and}\qquad\qquad g_{n,r}: \dhomn{X} \to \dhom{n}{M_r}{Y}
$$
induced by $f$ and $g$ (Notice that one of the two may have image inside some $H_{n,s}$ with $s<M_r$. In this case we may pass to $\dhom{n}{M_r}{Y}$ by composing with $\iota_{s,M_r}$). 
Observe that for large enough $r$, the maps $f_{n,r}$ and $g_{n,r}$ are the same, since $f$ and $g$ are bornotopy equivalent, that is their images are point-wise at uniformly bounded distance. 
Thus, by Lemma \ref{lem:directlimitisomorphism}, we have the following:
\begin{proposition}\label{prop:samehomology}
 Let $f, g: X \to Y$ be bornotopy equivalent maps. Then they induce the same map on homology.
\end{proposition}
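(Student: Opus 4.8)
The plan is to show that, once the scale is large enough, the chain maps induced by $f$ and $g$ are chain homotopic, and then to transport this equality to the direct limit using Lemma \ref{lem:directlimitisomorphism}. By Definition \ref{defin:bornotopic} there is $R>0$ with $d_Y(f(x),g(x))<R$ for all $x\in X$. Since a uniformly $S$-bornologous map is also uniformly $S'$-bornologous for any $S'\geq S$, and enlarging the target scale only post-composes the induced map with the inclusions $\iota_{n,s,M_r}$ (which act as the identity on the direct limit), I may assume without loss of generality that $M_r=\max\{S_f(r),S_g(r)\}\geq R$ for all $r$. This also lets me regard both $\overline f_{n,r}$ and $\overline g_{n,r}$ as landing in the single complex $C_{\ast,M_r}(Y)$, which is what makes the interpolation below well-posed.

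First I would construct a prism operator at the chain level. For a singular $(n,r)$-cube $\sigma:Q_n\to X$, define $H\sigma:Q_{n+1}\to Y$ by reserving the new first coordinate for interpolation: $H\sigma(0,a_1,\ldots,a_n)=f(\sigma(a_1,\ldots,a_n))$ and $H\sigma(1,a_1,\ldots,a_n)=g(\sigma(a_1,\ldots,a_n))$. Two vertices of $Q_{n+1}$ at Hamming distance one either differ in the first coordinate, in which case their images are at distance less than $R\leq M_r$, or differ in another coordinate, in which case their images are $f$- or $g$-images of two points of $X$ at distance at most $r$, hence within $M_r$. Thus each $H\sigma$ is a singular $(n+1,M_r)$-cube, and $H$ extends linearly to a homomorphism $C_{n,r}(X)\to C_{n+1,M_r}(Y)$. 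Reading off the faces gives $A_1^{n+1}H\sigma=\overline f_{n,r}(\sigma)$, $B_1^{n+1}H\sigma=\overline g_{n,r}(\sigma)$, and, in the remaining directions, $A_{i+1}^{n+1}H\sigma=H(A_i^n\sigma)$ and $B_{i+1}^{n+1}H\sigma=H(B_i^n\sigma)$; in particular a degenerate $\sigma$ produces a degenerate $H\sigma$, so $H$ descends to the quotient complexes.

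Next I would verify the chain-homotopy identity. Substituting the face relations above into $\partial H\sigma=\sum_{i=1}^{n+1}(-1)^{i}(A_i^{n+1}H\sigma-B_i^{n+1}H\sigma)$ and reindexing the sum over the non-interpolation faces yields $\partial H\sigma+H\partial\sigma=\overline g_{n,r}(\sigma)-\overline f_{n,r}(\sigma)$. This is exactly the prism relation already exploited in the proof of the homotopy axiom, item (1) of Theorem \ref{th:eilenbergsteenrod}; the only difference is that here, because $f$ and $g$ are merely bornologous rather than Lipschitz, I cannot invoke that axiom through a single global homotopy map $X\times\{0,1\}\to Y$, and must instead build $H$ directly cube by cube. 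By the standard homological-algebra consequence of being chain homotopic, $\overline f_{n,r}$ and $\overline g_{n,r}$ induce the same homomorphism $f_{n,r}=g_{n,r}:\dhomn{X}\to\dhom{n}{M_r}{Y}$ on homology.

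Finally, having $f_{n,s}=g_{n,s}$ for every sufficiently large $s$, I would apply Lemma \ref{lem:directlimitisomorphism} to the two direct-system homomorphisms $\tilde f,\tilde g:(\wdhomn{X},\tilde\iota_{n,r,s})\to(\wdhom{n}{r}{Y},\tilde\iota_{n,r,s})$ (both with target-scale function $r\mapsto M_r$, taken monotone after passing to a majorant) to conclude $f_{n,*}=g_{n,*}$ on $\dhomi{X}$. I expect the only delicate point to be the scale bookkeeping: since $f$ and $g$ a priori land in the distinct scales $S_f(r)$ and $S_g(r)$, one must push both into the common scale $M_r$ and ensure $M_r\geq R$ before the prism $H\sigma$ can even be formed against a single complex $C_{\ast,M_r}(Y)$. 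Once that matching is arranged, the Lipschitz estimate for $H\sigma$ and the sign computation of the boundary are routine.
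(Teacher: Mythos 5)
Your proposal is correct and follows the same overall strategy as the paper: push both induced maps into the common scale $M_r=\max\{S_f(r),S_g(r)\}$ and then invoke Lemma \ref{lem:directlimitisomorphism} to transfer the eventual equality of the maps $f_{n,r}$ and $g_{n,r}$ to the direct limit. The one substantive difference is that the paper simply asserts that ``for large enough $r$, the maps $f_{n,r}$ and $g_{n,r}$ are the same, since \ldots their images are point-wise at uniformly bounded distance,'' without explaining why bounded pointwise distance forces equality of the induced maps on homology --- and that is precisely the nontrivial step. Your prism operator $H\colon C_{n,r}(X)\to C_{n+1,M_r}(Y)$, defined cube by cube with the new coordinate interpolating between $f\circ\sigma$ and $g\circ\sigma$, supplies this missing argument: after enlarging the scale so that $M_r\geq R$, each $H\sigma$ is a genuine singular $(n+1,M_r)$-cube, $H$ kills degenerate cubes, and the identity $\partial H\sigma+H\partial\sigma=\overline g_{n,r}(\sigma)-\overline f_{n,r}(\sigma)$ gives a chain homotopy, hence $f_{n,s}=g_{n,s}$ on $\dhom{n}{s}{X}$ for all large $s$. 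You are also right that the homotopy axiom of Theorem \ref{th:eilenbergsteenrod} cannot be cited directly, since $f$ and $g$ are only bornologous, not Lipschitz, so no single Lipschitz map $X\times\{0,1\}\to Y$ realizes the homotopy; building $H$ at the chain level is the correct workaround. Your attention to the scale bookkeeping (making $r\mapsto M_r$ monotone and at least $R$, and composing with the inclusions $\iota_{n,s,M_r}$ where needed) is exactly what Definition \ref{defin:homomorphism of direct systems} and Lemma \ref{lem:directlimitisomorphism} require. In short, your proof is a complete and more rigorous version of the paper's sketch.
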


We can now prove Theorem \ref{th:invariance}.

\begin{proof}[Proof of Theorem \ref{th:invariance}]
Assume $f:X\to Y$ is a bornotopy equivalence and let $g:Y\to X$ its bornologous inverse, that is, $g\circ f\sim\id_X$ and $f\circ g\sim\id_Y$. By Proposition \ref{prop:samehomology} 
this tells us that $f \circ g$ and $\id_Y$ induce the same map on homology, that is, $(f \circ g)_* = \id_{\dhomr{\infty}{X}}$ and $(g \circ f)_* = \id_{\dhomr{\infty}{Y}}$. By Equation \ref{eq:composition}, we then get $f_* \circ g_* = \id_{\dhomr{\infty}{X}}$ and $g_* \circ f_* = \id_{\dhomr{\infty}{Y}}$, that shows that $f_*$ and $g_*$ are both isomorphisms.
\end{proof}

\section{Conclusions and Open Problems}\label{se:conclusion}

In this paper, we discussed a discrete homology theory for metric spaces, related to discrete homotopy theory. We proved some analogues of classical results. 
We also constructed a new coarse homology theory, and showed its relationship to bornotopy equivalence. However, there are many future directions left to go. 
We list a few open questions below.

\begin{enumerate}
\item We proved the Hurewicz theorem in dimension 1. It would interesting to see if the theorem holds also in higher dimension.
\item We proved the \emph{discretization theorem} (Theorem \ref{th:discretization}) in dimension 1 and only for the fundamental group. Also in this case, 
it would be interesting to see if an analogous theorem holds in higher dimension and for both homotopy and homology groups.
\item Let $(X,d)$ be a path-connected metric space. Observe that, consequently, $X$ is connected at scale $r$, for all $r>0$. 
Therefore, we may consider both the standard fundamental group of $X$, $\pi_1(X)$, and the net of discrete fundamental groups $\{A_{1,r}(X)\}_{r>0}$. 
Is there any relation between $\pi_1(X)$ and some sort of limit of the $A_{1,r}(X)$'s, as $r\to0$? Of course, one should first clarify which notion of limit would be suitable. 
Note that, as $r$ decreases, the net $A_{1,r}(X)$ forms an inverse system of groups. So, perhaps, the most natural notion of limit would be that of inverse limit. 
What makes this problem more interesting is the fact that $\pi_1(X)$ is not isomorphic to an inverse limit of $\{A_{1,r}(X)\}_{r > 0}$ in general. Consider $\mathbb R^2\setminus\{(0,0)\}$ equipped with the 
Euclidean metric. Then $A_{1,r}(X)=\{0\}$, for all $r>0$, but $\pi_1(X)=\mathbb Z$. 
\item An interesting problem would be that of finding a coarse analogue of the excision theorem.
\item Many other open problems would come from comparing our coarse homology theory with previously known coarse homology theories or with other known asymptotic concepts, such as asymptotic dimension, asymptotic cones, etc.
\end{enumerate}

\end{document}